\newtheorem{theo}{Theorem}
\newenvironment{theorem}{\vspace{4mm}\begin{theo}}{\end{theo}}
\newtheorem{lem}[theo]{Lemma}
\newenvironment{lemma}{\vspace{4mm}\begin{lem}}{\end{lem}}
\newtheorem{coro}[theo]{Corollary}
\newenvironment{corollary}{\vspace{4mm}\begin{coro}}{\end{coro}}
\newtheorem{rem}[theo]{Remark}
\newenvironment{remark}{\vspace{4mm}\begin{rem}\rm}{\end{rem}}
\newtheorem{conjec}{Conjecture}
\newtheorem{prop}[theo]{Proposition}
\newtheorem{defi}{Definition}
\newtheoremstyle{citing}{}{}{\itshape}{}{\bfseries}{.}%
 { }{\thmnote{#3}}
\theoremstyle{citing}
\newcommand{\Z}{\mathbb{Z}}
\newcommand{\N}{\mathbb{N}}
\newcommand{\R}{\mathbb{R}}
\newcommand{\ind}{\scalebox{1.2}{\raisebox{-0.2mm}{$\mathds{1}$}}}
\newcommand{\abs}[1]{\left\vert #1 \right\vert}	
\newcommand{\norm}[1]{\left\Vert #1 \right\Vert}	
\renewcommand{\l}{\langle}
\renewcommand{\r}{\rangle}
\renewcommand{\a}{\alpha}
\newcommand{\g}{\gamma}
\newcommand{\eps}{\varepsilon}
\renewcommand{\O}{\Omega}
\newcommand{\T}{\mathcal{T}}
\newcommand{\nlongleftrightarrow}{\longarrownot\longleftrightarrow}
\begin{document}

\title{Rapid mixing of Swendsen-Wang and single-bond dynamics in two dimensions}
\author{Mario Ullrich\footnote{The author was supported by the DFG GK 1523.}\\
	Mathematisches Institut, Universit\"at Jena\\   
	email: mario.ullrich@uni-jena.de}



\maketitle

\begin{abstract}
We prove that the spectral gap of the Swendsen-Wang dynamics for 
the random-cluster model on arbitrary graphs with $m$ edges is 
bounded above by $16 m \log m$ times the spectral gap of the 
single-bond (or heat-bath) dynamics. 
This and the corresponding lower bound (from \cite{U3}) imply 
that rapid mixing of these two dynamics is equivalent. \\
Using the known lower bound on the spectral gap of the Swendsen-Wang 
dynamics for the two dimensional square lattice $\Z_L^2$ of side 
length $L$ at high temperatures and a result for the single-bond dynamics on 
dual graphs, we obtain rapid mixing of both dynamics on $\Z_L^2$ at all 
non-critical temperatures.
In particular this implies, as far as we know, the first proof of rapid mixing 
of a classical Markov chain for the Ising model on $\Z_L^2$ at all temperatures.
\end{abstract}

\section{Introduction}

Markov chains for the random-cluster model and the closely related 
$q$-state Potts model are the topic of many research articles from various 
areas of mathematics and statistical physics. 
The probably most studied model is the Ising model (or 2-state Potts model) 
on the two-dimensional square lattice. While there is almost complete 
knowledge about the mixing properties of single-spin dynamics, such as the 
heat-bath dynamics, there are only a few results on cluster algorithms, 
such as Swendsen-Wang, or dynamics on the corresponding random-cluster 
model. The single-spin dynamics on $\Z_L^2$, i.e. the two dimensional 
square lattice of side length $L$, is known to mix rapidly 
above the critical temperature \cite{MO1} and 
below the critical temperature the mixing time is exponential in the side 
length $L$, see \cite{CGMS}. See also \cite{M} for an excellent 
survey of the (at that time) known results.
Only recently it was proven by Lubetzky and Sly \cite{LS}
that the single-spin dynamics is also rapidly mixing at the critical 
temperature. One approach to overcome the \emph{torpid} (or slow) 
mixing at low temperatures was to consider cluster algorithms that change 
the spin of a large portion of vertices at once. The most successful approach 
(so far) is the Swendsen-Wang dynamics (SW) that is based on the close relation 
between Potts and random-cluster models, see \cite{SW} and \cite{ES}.
But although it is conjectured that this dynamics is rapidly mixing at high and 
at low temperatures, again most of the results concern high temperatures. 
Known results for general graphs include rapid mixing on trees and complete 
graphs at all temperatures, see e.g. \cite{CF}, \cite{CDFR} and \cite{LNP}, 
and graphs with small maximum degree at high temperatures , see \cite{CF} and 
\cite{Hu}. Additionally, it is proven that, for bounded degree graphs, 
rapid mixing of single-spin dynamics implies rapid mixing of SW \cite{U2}. 
At low temperatures there are only a few results on the mixing time of 
SW. Beside the results for trees and complete graphs, we are only aware 
of two articles concerning the low temperature case, \cite{M2} and \cite{Hu}. 
While Huber \cite{Hu} states rapid mixing for temperatures below some constant 
that depends on the size of the graph, Martinelli \cite{M2} gave a result 
for hypercubic subsets of $\Z^d$ at sufficiently low temperatures that do not 
depend on the side length. Additionally to the rapid mixing results for SW 
there are some results on torpid mixing. These include torpid mixing for 
the $q$-state Potts model at the critical temperature on the complete graph 
for all $q\ge3$ \cite{GJ} and on hypercubic subsets of $\Z^d$ for $q$ 
sufficiently large \cite{BCT}.

In this article we study the mixing properties of the Swendsen-Wang and the 
heat-bath dynamics for the random-cluster model 
(In fact, SW can be seen as a Markov chain for random-cluster and Potts 
models.) and we prove that the spectral gap of SW is bounded above by  
some polynomial in the size of the graph times the spectral gap of the 
heat-bath dynamics. In particular, this implies rapid mixing of SW for the 
Potts model on the two-dimensional square lattice at all temperatures below 
the critical one.

To state our results in detail, we first have to define the models and the 
algorithms. Let $G=(V,E)$ be a graph with finite vertex set $V$ and 
edge set $E$. 
The \emph{random-cluster model} 
(also known as the FK-model) with parameters $p\in(0,1)$ and $q\in\N$, 
see Fortuin and Kasteleyn \cite{FK}, is defined on the graph $G$ by its 
state space $\O_{\rm RC}=\{A: A\subseteq E\}$ and the RC measure
\[
\mu(A) \;:=\; \mu^G_{p,q}(A) \;=\; 
\frac1{Z(G,p,q)}\,
\left(\frac{p}{1-p}\right)^{\abs{A}}\,q^{c(A)},
\]
where $c(A)$ is the number of connected components 
in the graph $(V,A)$, counting isolated vertices as a component, 
and $Z$ is the normalization constant that makes $\mu$ a 
probability measure. Note that this model is well-defined also for 
non-integer values of $q$, but we do not need this generalization here. 
See \cite{G1} for further details and related topics.

A closely related model is 
the \emph{$q$-state Potts model} on $G$ at inverse temperature 
$\beta\ge0$, that is defined as the set of 
possible \emph{configurations} 
$\O_{\rm P}=[q]^V$, where $[q]\,{:=}\,\{1,\dots,q\}$ 
is the set of \emph{colors} (or spins),
together with the probability measure
\vspace{1mm}
\[
\pi(\sigma) \;:=\; \pi^{G}_{\beta,q}(\sigma) \;=\; 
	\frac1{Z(G,1-e^{-\beta},q)}\,
	\exp\left\{\beta\,\sum_{u,v:\, \{u,v\}\in E}
	\ind\bigl(\sigma(u)=\sigma(v)\bigr)\right\}
\]
for $\sigma\in\O_{\rm P}$, where 
$Z(\cdot,\cdot,\cdot)$ is the same normalization 
constant as for the RC model (see \cite[Th. 1.10]{G1}). 
For $q=2$ this model is called \emph{Ising model}.

The connection of these models is given by a coupling of the RC 
and the Potts measure in the case $p=1-e^{-\beta}$. 
Let $\sigma\in\O_{\rm P}$ and $A\in\O_{\rm RC}$. 
Then the joint measure of 
$(\sigma,A)\in\O_{\rm J}:=\O_{\rm P}\times\O_{\rm RC}$ is defined by
\[
\nu(\sigma,A)\;:=\;\nu_{p,q}^G(\sigma,A)
	\;=\;\frac{1}{Z(G,p,q)}\,
	\left(\frac{p}{1-p}\right)^{\abs{A}}\,\ind(A\subset E(\sigma)),
\]
where
\[
E(\sigma) \;:=\; \Bigl\{\{u,v\}\in E:\,\sigma(u)=\sigma(v)\Bigr\}.
\]
The marginal distributions of $\nu$ are exactly $\pi$ and 
$\mu$, respectively, and we will call $\nu$ the \emph{FKES} 
(Fortuin-Kasteleyn-Edwards-Sokal) \emph{measure}, see e.g. 
\cite{ES} and \cite{G1}.

The \emph{Swendsen-Wang dynamics} (SW) uses this coupling implicitly 
in the following way. Suppose the SW at time $t$ is in the state 
$A_t\in\O_{\rm RC}$. We choose $\sigma_t\in\O_{\rm P}$ with respect 
to the measure $\nu(\cdot,A_t)$, i.e. every connected component of 
$(V,A_t)$ is colored independently and uniformly at random with a 
color from $[q]$. Then take $E(\sigma_t)$ and delete each edge 
independently with probability $1-p$ to obtain $A_{t+1}\in\O_{\rm RC}$, 
which can be seen as sampling from $\nu(\sigma_t,\cdot)$. 
Denote by $P_{\rm SW}$ the transition matrix of this Markov chain. 
Of course, we can make these two steps in reverse order to obtain 
a Markov chain for the $q$-state Potts model with transition matrix 
$\widetilde{P}_{\rm SW}$.

The \emph{heat-bath dynamics} (HB) for the random-cluster model is a local 
Markov chain that, given the current state $A_t\in\O_{\rm RC}$, 
sets $A_{t+1}=A_t$ with probability $\frac12$ and otherwise 
chooses a edge $e\in E$ uniformly at random and changes the state at 
most at the edge $e$ with respect to the conditional measure given all 
the other edges, which is sampling of $A_{t+1}$ 
from the conditional measure $\mu(\cdot\mid\{A_t\cup e, A_t\setminus e\})$. 
The transition matrix of this chain is denoted $P_{\rm HB}$.

The \emph{spectral gap} of a Markov chain with transition matrix $P$ 
is defined by 
\[
\lambda(P) \;:=\; 1 - \max\Bigl\{\abs{\xi}:\, \xi 
		\text{ is an eigenvalue of } P,\; \xi\neq1\Bigr\}.
\]

We prove

\begin{theorem} \label{th:main}
Let $P_{\rm SW}$ (resp. $P_{\rm HB}$) be the transition matrix of the 
Swendsen-Wang (resp. heat-bath) dynamics for the random-cluster model 
on a graph with $m$ edges. Then 
\[
\lambda(P_{\rm SW}) \;\le\; 16m \log(m)\; \lambda(P_{\rm HB}).
\]
\end{theorem}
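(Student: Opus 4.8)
The plan is to reduce the eigenvalue inequality to a comparison of Dirichlet forms, and then to exploit the product structure that the FKES coupling $\nu$ forces on the edges once a colouring is fixed. For a $\mu$-reversible chain $P$ write $\mathcal{E}_P(f,f)=\langle f,(I-P)f\rangle_\mu$ for its Dirichlet form. Both of our chains are positive semidefinite: $P_{\rm HB}$ carries the lazy factor $\tfrac12 I$, and $P_{\rm SW}$ factors as $P_{\rm SW}=S S^\ast$, where $S(A,\sigma)=\nu(\sigma,A)/\mu(A)$ colours the components and its $\mu$-adjoint $S^\ast(\sigma,A)=\nu(\sigma,A)/\pi(\sigma)$ resamples the edges. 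Consequently $\lambda(P)=\min\{\mathcal{E}_P(f,f)/\mathrm{Var}_\mu(f):\mathrm{Var}_\mu(f)>0\}$ for each of them, and it suffices to prove the Dirichlet-form bound $\mathcal{E}_{\rm SW}(f,f)\le 16\,m\log(m)\,\mathcal{E}_{\rm HB}(f,f)$ for every $f:\O_{\rm RC}\to\R$; evaluating this at the minimiser of the Rayleigh quotient for $P_{\rm HB}$ then yields the theorem.

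First I would compute the two Dirichlet forms explicitly. Because $P_{\rm SW}=SS^\ast$, one has $\langle f,P_{\rm SW}f\rangle_\mu=\|S^\ast f\|_\pi^2$, and since $(S^\ast f)(\sigma)=\mathbb{E}_\nu[f\mid\sigma]$ the law of total variance gives
\[
\mathcal{E}_{\rm SW}(f,f)\;=\;\mathrm{Var}_\mu(f)-\mathrm{Var}_\pi\bigl(\mathbb{E}_\nu[f\mid\sigma]\bigr)\;=\;\mathbb{E}_\pi\bigl[\mathrm{Var}_\nu(f\mid\sigma)\bigr].
\]
The decisive observation is that, conditioned on a colouring $\sigma$, the measure $\nu(\cdot\mid\sigma)$ is a \emph{product} measure: each edge of $E(\sigma)$ is kept independently with probability $p$ and every other edge is forbidden. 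Thus the Swendsen--Wang Dirichlet form is exactly the $\sigma$-average of the variance of $f$ under this product measure. For the heat-bath chain, expanding $P_{\rm HB}=\tfrac12 I+\tfrac1{2m}\sum_{e}Q_e$ (with $Q_e$ the single-bond update at $e$) gives $\mathcal{E}_{\rm HB}(f,f)=\tfrac1{2m}\sum_{e\in E}\mathbb{E}_\mu\bigl[\mathrm{Var}_\mu(f\mid A_{E\setminus e})\bigr]$.

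The core of the argument is to bound the product-measure variance $\mathbb{E}_\pi[\mathrm{Var}_\nu(f\mid\sigma)]$ by the single-edge quantities appearing in $\mathcal{E}_{\rm HB}$. I would do this by comparing $P_{\rm SW}$ with an auxiliary chain on the joint space that, after recolouring, performs $N=\lceil c\,m\log m\rceil$ single-bond edge updates \emph{conditioned on the fixed colouring} instead of resampling the whole edge set at once. Given $\sigma$, resampling the edges is Glauber dynamics for a product measure, so by a coupon-collector coverage the $N$-step run touches every edge of $E(\sigma)$ with probability at least $1-1/m$; on that event it reproduces the full resample of one Swendsen--Wang step, and the exceptional event contributes negligibly. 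This simulation relates $\mathcal{E}_{\rm SW}$ to $N$ times an averaged single-bond form and is the source of the factor $m\log m$. It then remains to compare, edge by edge, the single-bond update conditioned on $\sigma$ (which keeps $e$ with probability exactly $p$) with the random-cluster update $Q_e$ (which keeps $e$ with a connectivity-dependent probability); this ratio is controlled by an absolute constant, and summing over the $m$ edges together with the $\tfrac1{2m}$ normalisation produces the numerical factor $16$.

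The main obstacle is precisely this last comparison combined with the coverage step. The conditioning is genuinely different on the two sides: $\nu(\cdot\mid\sigma)$ fixes the keep-probability of $e$ at $p$, whereas $Q_e$ uses a probability that depends on whether the endpoints of $e$ are already joined by the remaining edges, so a naive term-by-term bound of the conditional variances can fail and one is forced through the single-bond simulation rather than a direct Efron--Stein tensorisation. The delicate points are to show that realising a full resample by random single-edge updates costs only one logarithm of $m$ (the coupon-collector coverage), and that the mismatch between the two conditional keep-probabilities costs only a bounded factor, so that the two losses combine into the clean constant $16\,m\log m$. The comparison is not expected to be tight, but as noted it suffices, together with the lower bound of \cite{U3}, to make rapid mixing of the two dynamics equivalent.
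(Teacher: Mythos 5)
Your plan follows essentially the same route as the paper: pass to the FKES space, exploit that conditioned on the colouring the edge measure is a product (equivalently, that the conditioned single\mbox{-}edge updates $T_e$ of \eqref{eq:T-e} are commuting projections), extract the factor $m\log m$ from a coupon\mbox{-}collector expansion of $\bigl(\tfrac1m\sum_e T_e\bigr)^k$, and finish with a constant\mbox{-}factor comparison between the conditioned single\mbox{-}bond update and the heat\mbox{-}bath update (Lemma~\ref{lemma:SB-HB}). Phrasing the argument with Dirichlet forms instead of operator norms is a cosmetic difference, and your identity $\mathcal{E}_{\rm SW}(f,f)=\mathbb{E}_\pi[\mathrm{Var}_\nu(f\mid\sigma)]$ is correct.

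There is, however, one genuine gap: the sentence ``this simulation relates $\mathcal{E}_{\rm SW}$ to $N$ times an averaged single-bond form'' is precisely the step that carries the technical weight, and you assert it rather than prove it. Your auxiliary chain recolours once and then performs $N$ edge updates under the \emph{same} colouring; in the paper's notation this is $M T^N M^*$, which is \emph{not} $(MTM^*)^N=P_{\rm SB}^N$, because the single\mbox{-}bond chain resamples the colouring after every edge update. So the elementary fact ``$N$ steps of a chain shrink the Dirichlet form by at most a factor $N$'' does not apply; one needs a comparison between $MT^NM^*$ and $MTM^*$, and this is exactly where the paper invests its effort (Lemmas~\ref{lemma:tech_mon} and \ref{lemma:tech_in}, proved via the positive square root of $T$, yielding $\norm{RTR^*}_{H_1}^{2k}\le\norm{RT^kR^*}_{H_1}$). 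In your formulation the step can be closed by noting that $M^*f(\sigma,A)=f(A)$, hence $\norm{M^*f}_\nu=\norm{f}_\mu$ and $\mathcal{E}_{MT^jM^*}(f,f)=\langle M^*f,(I-T^j)M^*f\rangle_\nu$, after which spectral calculus for the positive contraction $T$ on $L_2(\nu)$ and $1-t^N\le N(1-t)$ give $\mathcal{E}_{MT^NM^*}(f,f)\le N\,\mathcal{E}_{\rm SB}(f,f)$; but some such argument must be supplied. Two smaller points: the claim that ``the exceptional event contributes negligibly'' in the coverage step needs the fact that the uncovered terms $\T_\a$ are still positive contractions (products of commuting projections), so that their Dirichlet forms are nonnegative and the error only inflates the bound by $(1-\delta)^{-1}$; and the variational characterisation $\lambda(P)=\min_f\mathcal{E}_P(f,f)/\mathrm{Var}_\mu(f)$ that you use for all three chains should be justified by recording that $P_{\rm HB}$, $P_{\rm SB}=MTM^*$ and $P_{\rm SW}=M\T M^*$ are all positive semidefinite.
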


Using the corresponding lower bound, which was proven in \cite{U3} 
 we obtain that SW is rapidly mixing 
if and only if HB is rapidly mixing, since the spectral gaps can 
differ only by a polynomial in the number of edges of the graph.
Furthermore we prove that the heat-bath dynamics for the RC model 
on a \emph{planar} graph $G$ with parameters $p$ and $q$ has the same spectral 
gap than the heat-bath dynamics for the \emph{dual model}, which is the 
random-cluster model on the dual graph $G^\dag$ 
(see Section \ref{subsec:dual} for definitions)
with parameters $p^*$ and $q$, where $p^*$ satisfies 
$\frac{p^*}{1-p^*}=\frac{q(1-p)}{p}$. We denote the dynamics for the dual 
model by $P^\dag_{\rm HB}$ (resp. $P^\dag_{\rm SW}$).
This was probably known before, but we could not found a reference.
It follows

\begin{corollary} \label{coro:SW_dual}
Let $P_{\rm SW}$ be the transition matrix of the 
Swendsen-Wang dynamics for the random-cluster model 
on a planar graph $G$ with $m$ edges and let $P^\dag_{\rm SW}$ be the 
SW dynamics for the dual model.
Then there exists a constant 
$c\le16\min\{q,\frac1{1-p}\}$, such that
\[
\lambda(P_{\rm SW}) \;\le\; c m \log(m)\; \lambda(P^\dag_{\rm SW}).
\]
\end{corollary}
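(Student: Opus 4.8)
The plan is to chain together the two comparison results. Theorem~\ref{th:main} gives $\lambda(P_{\rm SW}) \le 16m\log(m)\,\lambda(P_{\rm HB})$ for the random-cluster model on $G$, and the promised duality result for the heat-bath dynamics (stated just before the corollary) asserts that the heat-bath dynamics for the RC model on the planar graph $G$ with parameters $(p,q)$ has the \emph{same} spectral gap as the heat-bath dynamics for the dual model on $G^\dag$ with parameters $(p^*,q)$; that is, $\lambda(P_{\rm HB}) = \lambda(P^\dag_{\rm HB})$. Substituting gives $\lambda(P_{\rm SW}) \le 16m\log(m)\,\lambda(P^\dag_{\rm HB})$, so the dual heat-bath gap now controls the primal Swendsen-Wang gap.

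It remains to relate $\lambda(P^\dag_{\rm HB})$ back to $\lambda(P^\dag_{\rm SW})$, the Swendsen-Wang gap of the dual model. Here I would invoke the lower-bound companion to Theorem~\ref{th:main} from \cite{U3}, namely an inequality of the shape $\lambda(P_{\rm HB}) \le c'\,\lambda(P_{\rm SW})$ with an explicit constant $c'$, applied to the dual model on $G^\dag$. The point is that $G^\dag$ also has $m$ edges (a planar graph and its dual have the same number of edges), so no size penalty is introduced by passing to the dual; the constant $c'$ from \cite{U3} is what produces the factor $\min\{q,\frac1{1-p}\}$ appearing in the corollary. Concretely, the lower bound should read $\lambda(P^\dag_{\rm HB}) \le c_0\min\{q,\tfrac1{1-p^*}\}\,\lambda(P^\dag_{\rm SW})$ for a universal $c_0$; combining $\frac{p^*}{1-p^*}=\frac{q(1-p)}{p}$ one checks that $\min\{q,\frac1{1-p^*}\}$ is dominated (up to the universal constant absorbed into the final $c\le16$) by $\min\{q,\frac1{1-p}\}$, which is exactly the stated bound.

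Assembling the chain, $\lambda(P_{\rm SW}) \le 16m\log(m)\,\lambda(P^\dag_{\rm HB}) \le c\,m\log(m)\,\lambda(P^\dag_{\rm SW})$ with $c\le 16\min\{q,\frac1{1-p}\}$, as claimed. The main obstacle I anticipate is not the algebra of the constant but verifying that the lower bound of \cite{U3} truly applies to the dual model \emph{with its own parameters} and tracking how its constant transforms under the duality relation $p\mapsto p^*$; one must be careful that the minimum $\min\{q,\frac1{1-p^*}\}$ in the dual parameters maps correctly to the primal expression $\min\{q,\frac1{1-p}\}$ rather than to something larger. Getting the direction of the duality inequality right (equality for heat-bath, one-sided comparison between SW and HB) is the step where a sign or an inversion error would quietly break the constant, so I would check the two regimes $q$ large versus $1-p$ small separately.
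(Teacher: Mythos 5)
Your overall architecture is exactly the paper's: Theorem~\ref{th:main} to pass from $P_{\rm SW}$ to $P_{\rm HB}$, Lemma~\ref{lemma:HB_dual} to pass to $P^\dag_{\rm HB}$ at no cost, and then a reverse comparison on the dual graph to reach $P^\dag_{\rm SW}$. For that last step the paper does not import a black-box bound from \cite{U3}; it uses its own Lemma~\ref{lemma:SB-HB}, namely $\lambda(P^\dag_{\rm HB})\le(1-p^*(1-q^{-1}))^{-1}\lambda\bigl(\tfrac{I+P^\dag_{\rm SB}}{2}\bigr)$, followed by \eqref{eq:lower}, $\lambda\bigl(\tfrac{I+P^\dag_{\rm SB}}{2}\bigr)\le\lambda(P^\dag_{\rm SW})$ (which comes from \cite[Theorem~5]{U2}). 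Your guessed inequality $\lambda(P^\dag_{\rm HB})\le\min\{q,\tfrac{1}{1-p^*}\}\,\lambda(P^\dag_{\rm SW})$ is precisely what these two steps give, since $(1-p^*(1-q^{-1}))^{-1}=p+q(1-p)\le\min\{q,\tfrac{1}{1-p^*}\}$. Up to here your proposal matches the paper.

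The gap is in the very step you flagged: the assertion that $\min\{q,\tfrac{1}{1-p^*}\}$ is dominated by $\min\{q,\tfrac{1}{1-p}\}$ up to a universal constant is false. Since $\tfrac{1}{1-p^*}=\tfrac{p+q(1-p)}{p}$, take $p=\tfrac12$ and $q$ large: then $\min\{q,\tfrac{1}{1-p^*}\}=\min\{q,1+q\}=q$ while $\min\{q,\tfrac{1}{1-p}\}=2$, so the ratio is $q/2$ and cannot be absorbed into any constant $\le 16$. What the chain actually yields is $c=16\,(1-p^*(1-q^{-1}))^{-1}=16\,(p+q(1-p))$, which is always $\le 16q$ (so the ``$q$'' half of the stated minimum survives) but is not $\le\tfrac{16}{1-p}$ in general. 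The factor $\min\{q,\tfrac{1}{1-p}\}$ is what bounds $(1-p(1-q^{-1}))^{-1}=\tfrac{q}{p+q(1-p)}$, i.e.\ it is the constant for the comparison in the \emph{opposite} direction, $\lambda(P^\dag_{\rm SW})\le c\,m\log(m)\,\lambda(P_{\rm SW})$. So your worry about how the minimum transforms under $p\mapsto p^*$ was exactly the right one, and the bookkeeping does not close as you wrote it; the route proves the corollary with $c\le 16\min\{q,\tfrac{1}{1-p^*}\}$ (hence certainly $c\le 16q$), and the constant printed in the corollary appears to contain the same $p\leftrightarrow p^*$ slip rather than being recoverable by a sharper argument.
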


If we consider the two-dimensional square lattice of side length $L$, 
i.e. the graph $\Z_L^2=(V_L,E_L)$ with $V_L=[L]^2$ and 
$E_L=\bigl\{\{u,v\}\in\binom{V_L}{2}:\, \abs{u_1-v_1}+\abs{u_2-v_2}=1 \bigr\}$, 
we can deduce the following from the results of \cite{U2}.

\begin{theorem} \label{th:SW_square}
Let $P_{\rm SW}$ be the transition matrix of the 
Swendsen-Wang dynamics for the random-cluster model 
on $\Z^2_L$ with parameters $p$ and $q$. Let $N=L^2$. 
Then there exist constants $c_p,c',C<\infty$ such that 
\begin{itemize}
\item\quad $\lambda(P_{\rm SW})^{-1} \;\le\; c_p N$ 
			\qquad\qquad for $p < p_c(q)$,
\item\quad $\lambda(P_{\rm SW})^{-1} \;\le\; c_p N^2 \log N$ 
			\quad for $p > p_c(q)$,
\item\quad $\lambda(P_{\rm SW})^{-1} \;\le\; c' N^C$ 
			\qquad\quad\, for $q=2$ and $p = p_c(2)$,
\end{itemize}
where $p_c(q)\,=\,\frac{\sqrt{q}}{1+\sqrt{q}}$.
\end{theorem}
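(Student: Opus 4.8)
My plan is to treat the three regimes separately, using in each case the comparison between the single-spin (Glauber) and the Swendsen-Wang dynamics established in \cite{U2}, and exploiting the fact that $\Z_L^2$ is essentially self-dual with self-dual point exactly $p_c(q)$. Indeed, setting $p^*=p$ in $\frac{p^*}{1-p^*}=\frac{q(1-p)}{p}$ gives $p^2=q(1-p)^2$, that is $p=\frac{\sqrt q}{1+\sqrt q}=p_c(q)$, and since $p\mapsto p^*$ is strictly decreasing we have that $p<p_c(q)$ corresponds to $p^*>p_c(q)$ and vice versa. Throughout I use that $\Z_L^2$ has bounded degree and $m=\Theta(N)$ edges, so that $m\log m=O(N\log N)$.

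The two cases not requiring duality come directly from \cite{U2}. For $p<p_c(q)$ (the high-temperature, disordered phase) single-spin dynamics mixes rapidly above criticality \cite{MO1}, and the comparison of \cite{U2} for bounded-degree graphs transfers this to the Swendsen-Wang chain, giving $\lambda(P_{\rm SW})^{-1}=O(N)$; this is the first assertion. For $q=2$ and $p=p_c(2)$ (critical Ising) duality is useless, since the model sits at the self-dual point, so instead I would use the polynomial upper bound on the relaxation time of single-spin dynamics at criticality proved by Lubetzky and Sly \cite{LS}, and again push it through the comparison of \cite{U2} to obtain $\lambda(P_{\rm SW})^{-1}=O(N^C)$ for some constant $C$, which is the third assertion.

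The crux is the low-temperature case $p>p_c(q)$, where single-spin dynamics is torpid and \cite{U2} cannot be applied directly. The idea is to pass to the dual model, whose parameter $p^*$ satisfies $p^*<p_c(q)$ and which therefore lies in the high-temperature regime already controlled above. Since $\Z_L^2$ is (essentially) self-dual, its dual $G^\dag$ is again a square lattice, and the dual of $G^\dag$ is the original graph. Applying Corollary \ref{coro:SW_dual} with $G^\dag$ in the role of the primal graph yields
\[
\lambda(P_{\rm SW}^{G^\dag})\;\le\;c'\,m\log(m)\;\lambda(P_{\rm SW}),
\]
which is a \emph{lower} bound on $\lambda(P_{\rm SW})$. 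Combining with the high-temperature bound $\lambda(P_{\rm SW}^{G^\dag})^{-1}=O(N)$ for the dual model gives
\[
\lambda(P_{\rm SW})^{-1}\;\le\;c'\,m\log(m)\;\lambda(P_{\rm SW}^{G^\dag})^{-1}\;=\;O(mN\log m)\;=\;O(N^2\log N),
\]
which is the second assertion.

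The main obstacle is precisely this reversed use of Corollary \ref{coro:SW_dual}: to upper bound the relaxation time of SW at low temperature I need a \emph{lower} bound on its gap, but the corollary only bounds a gap from above by the dual gap, so I must apply it to the dual graph and rely on self-duality to identify the dual of the dual with the original lattice. A secondary technical point is that the planar dual of the finite box $\Z_L^2$ is not literally a square lattice, since the outer face becomes a single vertex of degree $\Theta(L)$; this must be handled so that the bounded-degree, high-temperature input still applies to $G^\dag$, for instance by running the argument on the torus (where the dual is an exact translate of $\Z_L^2$) or by checking that one vertex of large degree does not affect the polynomial bounds.
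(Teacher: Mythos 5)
Your proposal follows essentially the same route as the paper: the subcritical and critical cases are quoted directly from \cite{U2}, and the supercritical case is obtained by applying Corollary~\ref{coro:SW_dual} with the dual graph in the role of the primal, using that $p>p_c(q)$ forces $p^*<p_c(q)$ so the dual model is in the high-temperature regime. The degree-$\Theta(L)$ outer vertex of the planar dual that you flag is handled in the paper exactly as your second suggestion indicates --- by invoking the variant of the high-temperature bound (Theorem~1$'$ of \cite{U2}) that applies to $\Z_{L-1}^2$ with one extra vertex attached to the boundary --- rather than by passing to the torus, which the paper defers to a remark.
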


An immediate consequence is the following corollary.

\begin{corollary} \label{coro:SW_square}
Let $\widetilde{P}_{\rm SW}$ be the transition matrix of the 
Swendsen-Wang dynamics for the $q$-state Potts model 
on $\Z^2_L$ at inverse temperature $\beta$. Let $N=L^2$. 
Then there exist constants $c_\beta,c',C<\infty$ such that 
\begin{itemize}
\item\quad $\lambda(\widetilde{P}_{\rm SW})^{-1} \;\le\; c_\beta N$ 
			\qquad\qquad for $\beta < \beta_c(q)$,
\item\quad $\lambda(\widetilde{P}_{\rm SW})^{-1} \;\le\; c_\beta N^2 \log N$ 
			\quad for $\beta > \beta_c(q)$,
\item\quad $\lambda(\widetilde{P}_{\rm SW})^{-1} \;\le\; c' N^C$ 
			\qquad\quad\, for $q=2$ and $\beta = \beta_c(2)$,
\end{itemize}
where $\beta_c(q)\,=\,\log(1+\sqrt{q})$.
\end{corollary}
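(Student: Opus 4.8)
The plan is to reduce the corollary to Theorem~\ref{th:SW_square} by showing that the Swendsen-Wang chain for the random-cluster model and the Swendsen-Wang chain for the Potts model have \emph{identical} spectral gaps, and then translating the three temperature regimes through the identity $p=1-e^{-\beta}$.

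First I would make the two-step structure of SW explicit. Writing $S$ for the stochastic matrix of the coloring step, $S(A,\sigma)=\nu(\sigma,A)/\mu(A)$ for $A\in\O_{\rm RC}$ and $\sigma\in\O_{\rm P}$, and $T$ for the percolation step, $T(\sigma,A)=\nu(\sigma,A)/\pi(\sigma)$, the definitions in the introduction give $P_{\rm SW}=ST$ as an operator on $\O_{\rm RC}$ and $\widetilde P_{\rm SW}=TS$ as an operator on $\O_{\rm P}$: the two dynamics perform the same two elementary steps in reverse order.

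The one substantive point is the spectral comparison. For any rectangular matrices $S$ and $T$ of compatible sizes, the products $ST$ and $TS$ have the same nonzero eigenvalues with the same algebraic multiplicities; this follows from the identity $\xi^{\abs{\O_{\rm P}}}\det(\xi I-ST)=\xi^{\abs{\O_{\rm RC}}}\det(\xi I-TS)$. Since both $P_{\rm SW}$ and $\widetilde P_{\rm SW}$ are ergodic with stationary distributions $\mu$ and $\pi$, each has a simple eigenvalue $1$, and every other eigenvalue is shared between the two chains. Consequently the second-largest eigenvalue moduli agree and $\lambda(P_{\rm SW})=\lambda(\widetilde P_{\rm SW})$. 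This is the step I expect to require the most care, mainly to confirm that $1$ is simple for each chain, so that the maximum in the definition of $\lambda$ is in both cases taken over exactly the common set of remaining eigenvalues.

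It then remains to match the regimes. Under the FKES correspondence $p=1-e^{-\beta}$ the map $\beta\mapsto p$ is strictly increasing, so $\beta<\beta_c(q)$ is equivalent to $p<p_c(q)$ (and likewise for $>$) once we check $1-e^{-\beta_c(q)}=p_c(q)$; indeed $e^{-\log(1+\sqrt q)}=\tfrac1{1+\sqrt q}$ gives $1-e^{-\beta_c(q)}=\tfrac{\sqrt q}{1+\sqrt q}=p_c(q)$, and the case $q=2$, $\beta=\beta_c(2)$ corresponds to $p=p_c(2)$. Substituting $\lambda(\widetilde P_{\rm SW})=\lambda(P_{\rm SW})$ into the three bounds of Theorem~\ref{th:SW_square}, absorbing the temperature dependence of the prefactor into a constant $c_\beta$ and keeping $c'$ and $C$ unchanged, yields the three stated inequalities and completes the proof.
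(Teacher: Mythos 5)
Your proposal is correct and follows essentially the same route as the paper: the corollary is deduced from Theorem~\ref{th:SW_square} via the identity $\lambda(P_{\rm SW})=\lambda(\widetilde P_{\rm SW})$ and the correspondence $p=1-e^{-\beta}$, $p_c(q)=1-e^{-\beta_c(q)}$. The only difference is that the paper simply cites \cite{U2} for the equality of the two spectral gaps (noted after \eqref{eq:SW}), whereas you supply the standard $ST$ versus $TS$ eigenvalue argument yourself, which is a correct way to fill in that cited fact.
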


This seems to be the first prove of rapid mixing of a classical 
Markov chain for the Ising model at all temperatures. 
In fact, in \cite{U2} a somehow artificial Markov chain, that 
makes a additional step at the dual graph, is proven to be rapid.

We also obtain for the heat-bath dynamics 

\begin{theorem} \label{th:HB_square}
Let $P_{\rm HB}$ be the transition matrix of the 
heat-bath dynamics for the random-cluster model 
on $\Z^2_L$ with parameters $p$ and $q$. Let $N=L^2$. 
Then there exist constants $c_p,c',C<\infty$ such that 
\begin{itemize}
\item\quad $\lambda(P_{\rm HB})^{-1} \;\le\; c_p N^2 \log N$ 
			\quad for $p \neq p_c(q)$,
\item\quad $\lambda(P_{\rm HB})^{-1} \;\le\; c' N^C$ 
			\qquad\quad\, for $q=2$ and $p = p_c(2)$,
\end{itemize}
where $p_c(q)\,=\,\frac{\sqrt{q}}{1+\sqrt{q}}$.
\end{theorem}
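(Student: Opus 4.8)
The plan is to combine the comparison inequality of Theorem~\ref{th:main} with the explicit upper bounds on $\lambda(P_{\rm SW})^{-1}$ from Theorem~\ref{th:SW_square}. Rewriting Theorem~\ref{th:main} as
\[
\lambda(P_{\rm HB})^{-1} \;\le\; 16\,m\log(m)\;\lambda(P_{\rm SW})^{-1}
\]
and using that $\Z_L^2$ has $m=2L(L-1)=O(N)$ edges, the prefactor $16\,m\log(m)$ is $O(N\log N)$. Hence every upper bound on the Swendsen-Wang relaxation time transfers, up to a factor $O(N\log N)$, to the heat-bath dynamics.

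First I would treat $p<p_c(q)$ directly: inserting the subcritical estimate $\lambda(P_{\rm SW})^{-1}\le c_pN$ (first bullet of Theorem~\ref{th:SW_square}) gives $\lambda(P_{\rm HB})^{-1}=O(N\log N)\cdot c_pN=O(N^2\log N)$, as required. For $q=2$ and $p=p_c(2)$ the same direct substitution of the critical estimate $\lambda(P_{\rm SW})^{-1}\le c'N^C$ yields $\lambda(P_{\rm HB})^{-1}=O(N^{C+1}\log N)$, which is again polynomial in $N$ and thus of the claimed form after enlarging the exponent.

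The delicate case is $p>p_c(q)$. The naive route, plugging the supercritical bullet $\lambda(P_{\rm SW})^{-1}\le c_pN^2\log N$ into the inequality above, only produces $O(N^3(\log N)^2)$, which is weaker than the asserted $N^2\log N$. To recover the sharp exponent I would instead invoke the duality of the heat-bath dynamics stated in the excerpt (the paragraph preceding Corollary~\ref{coro:SW_dual}), namely $\lambda(P_{\rm HB})=\lambda(P^\dag_{\rm HB})$, where $P^\dag_{\rm HB}$ is the heat-bath dynamics on the dual graph $(\Z_L^2)^\dag$ at the dual parameter $p^*$ with $\frac{p^*}{1-p^*}=\frac{q(1-p)}{p}$. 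Since $p_c(q)=\frac{\sqrt q}{1+\sqrt q}$ is exactly the self-dual point, the condition $p>p_c(q)$ forces $p^*<p_c(q)$, so on the dual side we are back in the subcritical regime. As $(\Z_L^2)^\dag$ is, apart from its single outer-face vertex, a square lattice with $O(N)$ vertices and edges, Theorem~\ref{th:SW_square} applies to it and its subcritical bullet gives $\lambda(P^\dag_{\rm SW})^{-1}\le c_{p^*}N$. Applying Theorem~\ref{th:main} on the dual graph and then the duality identity yields $\lambda(P_{\rm HB})^{-1}=\lambda(P^\dag_{\rm HB})^{-1}=O(N^2\log N)$.

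I expect the main obstacle to be this duality step. One must (i) prove the spectral-gap identity $\lambda(P_{\rm HB})=\lambda(P^\dag_{\rm HB})$ rigorously, and (ii) verify that the estimates of Theorem~\ref{th:SW_square}, which come from \cite{U2} and are stated for $\Z_L^2$ itself, remain valid on the dual graph in spite of its extra outer-face vertex and the associated (wired) boundary behaviour. The structural point that makes everything work is that the map $p\mapsto p^*$ carries the supercritical regime to the subcritical one, so that in every non-critical case it suffices to control the Swendsen-Wang relaxation time only where it is smallest, i.e. for parameters below $p_c(q)$.
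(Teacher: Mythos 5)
Your proposal is correct and follows essentially the same route as the paper: Theorem~\ref{th:main} applied directly for $p<p_c(q)$ and at the critical point, and for $p>p_c(q)$ the passage to the dual (subcritical) model via Lemma~\ref{lemma:HB_dual} together with the bound $\lambda(P^\dag_{\rm SW})^{-1}\le c_pN$ obtained from Theorem~1$'$ of \cite{U2} on the graph $\Z_{L-1}^2\cup_\delta v^*$. You have in fact made explicit the one point the paper leaves implicit, namely that the naive substitution of the supercritical Swendsen--Wang bound would only give $O(N^3(\log N)^2)$ and that the duality identity is what rescues the exponent.
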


The results of \cite{U2}, and hence the proofs of Theorems \ref{th:SW_square} 
and \ref{th:HB_square}, 
rely ultimately on the rapid mixing results for the heat-bath dynamics 
for the Potts (resp. Ising) model that were proven over the last decades. 
To state only some of them, see e.g. \cite{LS}, \cite{MO1}, \cite{MOS}l and 
\cite{A} together with the proof of exponential decay of connectivities 
up to the critical temperature from \cite{BDC}.
These articles give an almost complete picture over what is known so far 
about mixing of single-spin dynamics in $\Z^2_L$.

The plan of this article is as follows. 
In Section~\ref{sec:gap}, we introduce the necessary notation related 
to the spectral gap of Markov chains. 
Section~\ref{sec:alg} contains a more detailed description of the algorithms 
and the definition of the ``building blocks'' that are necessary to 
represent the dynamics on the FKES model.
In Section~\ref{sec:proof_main} we will prove Theorem~\ref{th:main}, and 
in Section~\ref{sec:proof_square} we introduce the notion of dual graphs and 
prove the remaining results from above. 

\section{Spectral gap and mixing time} \label{sec:gap}

As stated in the introduction, we want to estimate the efficiency of  
Markov chains. For an introduction to Markov 
chains and techniques to bound the convergence rate to the stationary 
distribution, see e.g. \cite{LPW}. 
In this article we consider the spectral gap as measure of the efficiency. 
Let $P$ be the transition matrix of a Markov chain with state space 
$\Omega$ that is ergodic, i.e. irreducible and aperiodic, 
and has unique stationary measure $\pi$. 
Additionally let the Markov chain $P$ be \emph{reversible} with 
respect to $\pi$, i.e.
\[
\pi(x)\,P(x,y) \;=\; \pi(y)\,P(y,x) \quad \text{ for all } x,y\in\O.
\]

Then we know that 
the spectral gap of the Markov chain can be expressed in terms of norms 
of the (Markov) operator $P$ that maps from 
$L_2(\pi):=(\R^\O,\pi)$ to $L_2(\pi)$, where 
inner product and norm are given by 
$\l f,g\r_\pi=\sum_{x\in\O}f(x) g(x) \pi(x)$ and 
$\Vert f\Vert_\pi^2:=\sum_{x\in\O}f(x)^2\pi(x)$, respectively. 
The operator is defined by
\begin{equation} \label{eq:map}
Pf(x) \;:=\; \sum_{y\in\O}\,P(x,y)\,f(y)
\end{equation}
and represents the expected value of the function $f$ after one step of 
the Markov chain starting in $x\in\O$. 
The \emph{operator norm} of $P$ is
\[
\Vert P\Vert_\pi \;:=\; \Vert P\Vert_{L_2(\pi)\to L_2(\pi)} 
\;=\; \max_{\Vert f\Vert_\pi\le1} \Vert Pf\Vert_\pi
\]
and we use $\Vert\cdot\Vert_\pi$ interchangeably for functions and 
operators, because it will be clear from the context which 
norm is used. It is well known that 
$\lambda(P)=1-\norm{P-S_\pi}_\pi$ for reversible $P$, 
where $S_\pi(x,y)=\pi(y)$, and that reversibility of $P$ is equivalent 
to self-adjointness of the corresponding Markov operator, i.e. 
$P=P^*$, where $P^*$ is the (\emph{adjoint}) operator that 
satisfies 
$\l f, Pg\r_{\pi} \;=\; \l P^*f, g\r_{\pi}$ for all 
$f,g\in L_2(\pi)$. The transition matrix that corresponds to the 
adjoint operator satisfies 
\[
P^*(x,y) = \frac{\pi(y)}{\pi(x)} P(y,x).
\]

If we are considering a \emph{family} of state spaces
$\{\O_n\}_{n\in\N}$ with a corresponding family of Markov chains 
$\{P_n\}_{n\in\N}$, we say that the chain is \emph{rapidly mixing}
for the given family if $\lambda(P_n)^{-1} = \mathcal{O}(\log(|\O_n|)^C)$
for all $n\in\N$ and some $C < \infty$.

In several (or probably most of the) articles on mixing properties of 
Markov chains the authors prefer to use the \emph{mixing time} as 
measure of efficiency, which is defined by
\[
\tau(P) \;:=\; \min\left\{t:\; 
	\max_{x\in\O}\sum_{y\in\O}\abs{P^t(x,y)-\pi(y)}\,\le\,\frac{1}{e}\right\}.
\]

The mixing time and spectral gap of a Markov chain (on finite state spaces) 
are closely related by the following inequality, see e.g. 
\cite[Theorem~12.3 \& 12.4]{LPW}.

\begin{lemma} \label{lemma:mixing-gap}
Let $P$ be the transition matrix of a reversible, ergodic Markov chain with 
state space $\O$ and stationary distribution $\pi$. Then
\[
\lambda(P)^{-1}-1 \;\le\; \tau(P) 
\;\le\; \log\left(\frac{2e}{\pi_{\rm min}}\right)\,\lambda(P)^{-1}, 
\]
where $\pi_{\rm min}:=\min_{x\in\O}\pi(x)$.
\end{lemma}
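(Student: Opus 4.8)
The plan is to use that reversibility makes $P$ self-adjoint on $L_2(\pi)$, as recalled above, so the spectral theorem furnishes a real spectrum $1=\xi_1>\xi_2\ge\dots\ge\xi_{\abs{\O}}\ge-1$ together with an orthonormal eigenbasis $f_1\equiv1,f_2,\dots$ satisfying $\l f_i,f_j\r_\pi=\mathbb{1}(i=j)$. Writing $\xi_*:=1-\lambda(P)=\max\{\abs{\xi_i}:i\ge2\}$ for the second-largest eigenvalue in modulus, both inequalities reduce to controlling the decay of $\xi_*^{t}$ and comparing it with the defining threshold $\frac1e$ of $\tau(P)$.

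For the lower bound I would pick an eigenfunction $f$ belonging to the eigenvalue $\xi$ with $\abs{\xi}=\xi_*$, normalized so that $\abs{f}$ attains its maximum at some $x_*\in\O$. Since $f\perp f_1$ we have $\sum_y\pi(y)f(y)=0$, so $\xi^{t}f(x_*)=P^{t}f(x_*)=\sum_y\bigl(P^{t}(x_*,y)-\pi(y)\bigr)f(y)$. Bounding the right-hand side by $\abs{f(x_*)}\sum_y\abs{P^{t}(x_*,y)-\pi(y)}$ and dividing by $\abs{f(x_*)}$ gives $\xi_*^{t}\le\sum_y\abs{P^{t}(x_*,y)-\pi(y)}$, valid for either sign of $\xi$. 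Evaluating at $t=\tau(P)$ and invoking the defining inequality $\le\frac1e$ yields $\xi_*^{\tau}\le\frac1e$, hence $\tau\ge -1/\log(1-\lambda)$; the elementary estimate $-\log(1-\lambda)\le\lambda/(1-\lambda)$ then delivers $\tau\ge\lambda^{-1}-1$.

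For the upper bound I would expand the density in the eigenbasis, $\frac{P^{t}(x,y)}{\pi(y)}-1=\sum_{i\ge2}\xi_i^{t}f_i(x)f_i(y)$, whence the $\chi^2$-type quantity satisfies $\sum_y\frac{(P^{t}(x,y)-\pi(y))^2}{\pi(y)}=\sum_{i\ge2}\xi_i^{2t}f_i(x)^2\le\xi_*^{2t}\sum_{i\ge2}f_i(x)^2$. The completeness relation $\sum_i f_i(x)^2=\frac1{\pi(x)}$ bounds this by $\xi_*^{2t}/\pi_{\rm min}$, and Cauchy--Schwarz gives $\sum_y\abs{P^{t}(x,y)-\pi(y)}\le\bigl(\sum_y\frac{(P^{t}(x,y)-\pi(y))^2}{\pi(y)}\bigr)^{1/2}\le\xi_*^{t}/\sqrt{\pi_{\rm min}}\le e^{-\lambda t}/\sqrt{\pi_{\rm min}}$. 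Requiring the last quantity to drop below $\frac1e$ and solving for $t$ produces $\tau\le\log\bigl(e/\sqrt{\pi_{\rm min}}\bigr)\lambda^{-1}$, which is dominated by the claimed $\log(2e/\pi_{\rm min})\lambda^{-1}$ since $\pi_{\rm min}\le1$.

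The main subtlety is that the gap here is the \emph{absolute} spectral gap, so I must ensure the eigenvalue nearest $-1$ is covered: the eigenfunction chosen for the lower bound may belong to a negative eigenvalue, and the $\chi^2$ estimate must use $\xi_*=\max\{\abs{\xi_2},\abs{\xi_{\abs{\O}}}\}$ rather than $\xi_2$ alone. Beyond this, only careful bookkeeping of constants is needed to land the upper bound inside $\log(2e/\pi_{\rm min})$.
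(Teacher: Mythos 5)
Your proof is correct and is essentially the standard argument behind the result the paper simply cites (Theorems 12.3 and 12.4 of \cite{LPW}): the eigenfunction evaluation at the maximizer for the lower bound, and the spectral expansion of $P^t(x,y)/\pi(y)$ with Cauchy--Schwarz and the completeness relation $\sum_i f_i(x)^2=1/\pi(x)$ for the upper bound. The only loose end is that $\tau(P)$ is an integer, so you really get $\tau\le\bigl\lceil\log\bigl(e/\sqrt{\pi_{\rm min}}\bigr)\lambda^{-1}\bigr\rceil$; this is still at most $\log\bigl(2e/\pi_{\rm min}\bigr)\lambda^{-1}$ because $\lambda\le1$ and the slack $\log 2+\tfrac12\log(1/\pi_{\rm min})\ge\tfrac32\log 2>1$ for $\pi_{\rm min}\le\tfrac12$.
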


In particular, we obtain the following for the random-cluster model. 

\begin{corollary} \label{coro:mixing-gap}
Let $P$ be the transition matrix of a reversible, ergodic Markov chain 
for the random-cluster model on $G=(V,E)$ with parameters $p$ and $q$. 
Then
\[
\lambda(P)^{-1}-1 \;\le\; \tau(P) 
\;\le\; \left(2 + \abs{E} \log\frac{1}{p(1-p)} + \abs{V} \log q\right)\,
			\lambda(P)^{-1}. 
\]
\end{corollary}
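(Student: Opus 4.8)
The plan is to derive Corollary~\ref{coro:mixing-gap} directly from Lemma~\ref{lemma:mixing-gap} by specializing the generic lower bound on $\pi_{\rm min}$ to the random-cluster measure $\mu=\mu^G_{p,q}$. The left inequality $\lambda(P)^{-1}-1\le\tau(P)$ is identical in both statements and carries over verbatim, so the entire content of the corollary is to show that for the RC model one has
\[
\log\left(\frac{2e}{\pi_{\rm min}}\right)\;\le\; 2 + \abs{E}\log\frac{1}{p(1-p)} + \abs{V}\log q.
\]
Thus the work reduces to a lower bound on $\mu_{\rm min}:=\min_{A\subseteq E}\mu(A)$.

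First I would write out $\mu(A)=\frac1{Z}\bigl(\frac{p}{1-p}\bigr)^{\abs A}q^{c(A)}$ and bound each factor from below over all $A\subseteq E$. The cluster count satisfies $1\le c(A)\le\abs V$, and the edge-weight factor $\bigl(\frac{p}{1-p}\bigr)^{\abs A}$ ranges over $\abs A\in\{0,\dots,\abs E\}$; since $p\in(0,1)$ this factor is at least $\min\{1,\bigl(\frac{p}{1-p}\bigr)^{\abs E}\}$. The cleaner route, which I expect matches the intended constant, is to bound $\mu(A)$ below by comparing it to the probability of the worst single configuration relative to the partition function. Writing $Z=\sum_{A}\bigl(\frac{p}{1-p}\bigr)^{\abs A}q^{c(A)}$, one gets a lower bound on any $\mu(A)$ by upper bounding $Z$ and lower bounding the numerator. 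For the numerator, use $q^{c(A)}\ge q^1=q$ is too weak; instead observe $\bigl(\frac{p}{1-p}\bigr)^{\abs A}q^{c(A)}\ge\min\{1,(\frac{p}{1-p})^{\abs E}\}\cdot q$, and for $Z$ note every term is at most $\max\{1,(\frac{p}{1-p})^{\abs E}\}\cdot q^{\abs V}$ times the number of terms $2^{\abs E}$.

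The key algebraic manipulation is to organize these crude bounds so the $2^{\abs E}$ and the $\max/\min$ asymmetry in $p$ collapse into the symmetric quantity $\log\frac1{p(1-p)}$. Concretely, after taking logarithms, the $\abs E$-dependent terms should assemble into $\abs E\log\frac1{p(1-p)}$: the factor $p(1-p)$ appears because the extreme edge-weights $\bigl(\frac{p}{1-p}\bigr)^{\pm\abs E}$ together with the combinatorial factor $2^{\abs E}$ (which contributes to $Z$) combine — indeed $\frac1{p(1-p)}=\frac1p\cdot\frac1{1-p}$ captures both the largest value $\frac{p}{1-p}$ of the ratio and the normalization, since $\frac{p}{1-p}\le\frac1{1-p}$ and $\frac{1-p}{p}\le\frac1p$. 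The $\abs V\log q$ term comes from $q^{c(A)}$ with $c(A)\le\abs V$, and the additive constant $2$ absorbs the $\log(2e)$ and the factor $2^{\abs E}$ bookkeeping.

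The main obstacle is getting the constant exactly right rather than merely up to universal factors: I must track whether the $2^{\abs E}$ from the number of subsets lands inside the $\abs E\log\frac1{p(1-p)}$ term or contributes separately, and confirm that the additive $2$ (not some larger constant) suffices to cover $\log(2e)$ together with any slack. The safe strategy is to prove the clean bound $\mu_{\rm min}\ge\frac1{2}\,\bigl(p(1-p)\bigr)^{\abs E}\,q^{-\abs V}$ (or an equivalent form), take $\log$, add the $\log(2e)$ from Lemma~\ref{lemma:mixing-gap}, and verify the result is dominated by the claimed expression. Once the inequality $\log\bigl(\tfrac{2e}{\mu_{\rm min}}\bigr)\le 2+\abs E\log\frac1{p(1-p)}+\abs V\log q$ is established, substituting $\abs E=\abs E$ and $\abs V=\abs V$ into the upper bound of Lemma~\ref{lemma:mixing-gap} yields the corollary immediately; no further Markov-chain input is needed since the lemma already provides the mixing-time/spectral-gap comparison in full generality.
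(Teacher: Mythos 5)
Your overall strategy --- specialize Lemma~\ref{lemma:mixing-gap} by lower-bounding $\pi_{\rm min}$ for the RC measure --- is exactly the intended route (the paper gives no separate proof; the corollary is meant to follow directly from the lemma). But the execution has a genuine gap: your bound on the partition function, namely $Z\le 2^{\abs E}\cdot(\text{max term})$, leaves a stray factor $2^{\abs E}$ that cannot be absorbed. Carried out, your estimate gives (for $p\le\frac12$) $\mu_{\rm min}\ge\bigl(\tfrac{p}{2(1-p)}\bigr)^{\abs E}q^{1-\abs V}$, and
\[
\log\frac{2(1-p)}{p}-\log\frac{1}{p(1-p)}\;=\;\log\bigl(2(1-p)^2\bigr),
\]
which is positive (up to $\log 2$) for small $p$; the resulting excess $\abs E\log\bigl(2(1-p)^2\bigr)$ grows linearly in $\abs E$ and cannot be hidden in the additive constant $2$. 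You flag this obstacle yourself but never resolve it. Your fallback target $\mu_{\rm min}\ge\frac12(p(1-p))^{\abs E}q^{-\abs V}$ is also not quite enough: it yields $\log(2e/\mu_{\rm min})\le\log(4e)+\dots=1+2\log 2+\dots\approx 2.39+\dots$, exceeding the stated constant $2$.

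The missing idea is a normalization trick that makes the $2^{\abs E}$ disappear entirely. Multiply numerator and denominator of $\mu(A)$ by $(1-p)^{\abs E}$ to write
\[
\mu(A)\;=\;\frac{p^{\abs A}(1-p)^{\abs E-\abs A}\,q^{c(A)}}{\sum_{B\subseteq E}p^{\abs B}(1-p)^{\abs E-\abs B}\,q^{c(B)}}.
\]
The denominator is at most $q^{\abs V}\sum_B p^{\abs B}(1-p)^{\abs E-\abs B}=q^{\abs V}$ by the binomial theorem (no $2^{\abs E}$), while the numerator is at least $\bigl(p(1-p)\bigr)^{\abs E}\cdot q^{c(A)}\ge\bigl(p(1-p)\bigr)^{\abs E}$, since $p^{\abs A}(1-p)^{\abs E-\abs A}\ge p^{\abs A}(1-p)^{\abs E-\abs A}\cdot p^{\abs E-\abs A}(1-p)^{\abs A}$. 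Hence $\mu_{\rm min}\ge\bigl(p(1-p)\bigr)^{\abs E}q^{-\abs V}$ with no extraneous constant, and $\log(2e/\mu_{\rm min})\le 1+\log 2+\abs E\log\frac{1}{p(1-p)}+\abs V\log q\le 2+\abs E\log\frac{1}{p(1-p)}+\abs V\log q$, as required. The left inequality of the corollary is indeed verbatim from the lemma.
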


Therefore, all results of this article can also be written in terms of 
the mixing time, loosing the same factor as in Corollary~\ref{coro:mixing-gap}.

\section{Joint representation of the algorithms} \label{sec:alg}

In order to make our description of the considered Markov chains complete, 
we state in this section formulas for their transition matrices.
Additionally, we introduce another local Markov chain that will be 
necessary for the further analysis and introduce a representation of the 
dynamics on (joint) FKES model.

The \emph{Swendsen-Wang dynamics} (on the RC model), as stated in the 
introduction, is based on the given connection of the random cluster 
and Potts models and has the transition matrix
\begin{equation} \label{eq:SW}
P_{\rm SW}(A,B) \;=\; q^{-c(A)}\,\left(\frac{p}{1-p}\right)^{\abs{B}}\,
	\sum_{\sigma\in\O_{\rm P}} \,(1-p)^{\abs{E(\sigma)}}\,
		\ind\bigl(A\cup B \subset E(\sigma)\bigr).
\end{equation}
Recall that we denote by $\widetilde{P}_{\rm SW}$ the Swendsen-Wang dynamics 
for the Potts model and note that both dynamics have the same spectral gap, 
see \cite{U2}.

%

The second algorithm we want to analyze is the (lazy) \emph{heat-bath dynamics}. 
Let $A\in\O_{\rm RC}$ be given and denote by $\stackrel{A}{\leftrightarrow}$ 
(resp. $\stackrel{A}{\nleftrightarrow}$) connected (resp. not connected) in 
the subgraph $(V,A)$.
Additionally we use throughout this article $A\cup e$ instead of $A\cup\{e\}$ 
(respectively for $\cap,\setminus$) and denote the endpoints of $e$ by $e^{(1)}$ 
and $e^{(2)}$, i.e. $e=\{e^{(1)},e^{(2)}\}$.
For $A,B\in\O_{\rm RC}$, $A\neq B$, the transition probabilities of the HB 
dynamics are given by

\begin{equation} \label{eq:HB}
P_{\rm HB}(A,B) \;:=\; \frac1{2\abs{E}}\,\sum_{e\in E}\,
\frac{\mu(B)}{\mu(A\cup e)+\mu(A\setminus e)}\;\ind(A\ominus B\subset e),
\end{equation}
where $\ominus$ denotes the symmetric difference and $P_{\rm HB}(A,A)$ is 
chosen such that $P_{\rm HB}$ is stochastic.
Hence, $P_{\rm HB}$ satisfies
\[
P_{\rm HB}(A,B) \;=\;  \frac1{2\abs{E}}\sum_{e\in E}\,\begin{cases}
p, & \text{ for } B=A\cup e \text{ and } 
				e^{(1)}\stackrel{A\setminus e}{\longleftrightarrow}e^{(2)}\\
1-p, & \text{ for } B=A\setminus e \text{ and } 
				e^{(1)}\stackrel{A\setminus e}{\longleftrightarrow}e^{(2)}\\
\frac{p}{p+q(1-p)}, & \text{ for } B=A\cup e \text{ and } 
				e^{(1)}\stackrel{A\setminus e}{\nlongleftrightarrow}e^{(2)}\\
\frac{q(1-p)}{p+q(1-p)}, & \text{ for } B=A\setminus e \text{ and } 
				e^{(1)}\stackrel{A\setminus e}{\nlongleftrightarrow}e^{(2)}
\end{cases}
\]
for $A\neq B$.
The heat-bath dynamics has the advantage that the corresponding HB dynamics 
for the dual model has the same spectral gap, see Section~\ref{subsec:dual}. 
Unfortunately, this Markov chains do not admit a representation on the joint 
model like the SW dynamics. Therefore we introduce the following (non-lazy) 
local dynamics with transition probabilities
\begin{equation} \label{eq:SB}
P_{\rm SB}(A,B) \;=\;  \frac1{\abs{E}}\sum_{e\in E}\,\begin{cases}
p, & \text{ for } B=A\cup e \text{ and } 
				e^{(1)}\stackrel{A}{\longleftrightarrow}e^{(2)}\\
1-p, & \text{ for } B=A\setminus e \text{ and } 
				e^{(1)}\stackrel{A}{\longleftrightarrow}e^{(2)}\\
\frac{p}{q}, & \text{ for } B=A\cup e \text{ and } 
				e^{(1)}\stackrel{A}{\nlongleftrightarrow}e^{(2)}\\
1-\frac{p}{q}, & \text{ for } B=A\setminus e \text{ and } 
				e^{(1)}\stackrel{A}{\nlongleftrightarrow}e^{(2)}.
\end{cases}
\end{equation}
We call this Markov chain the \emph{single-bond dynamics} (SB). 
This chain is inspired by the Swendsen-Wang dynamics since 
$P_{\rm SW}=P_{\rm SB}$ for a graph that consists of two vertices 
connected by a single edge. 
Note that $P_{\rm SW}$, $P_{\rm HB}$ and $P_{\rm SB}$ are reversible 
with respect to $\mu$.

Before we state the representation of $P_{\rm SW}$ and $P_{\rm SB}$ on 
the FKES model, we show that the spectral gaps of $P_{\rm SB}$ and 
$P_{\rm HB}$ are closely related. For this let $I(A,B):=\ind(A=B)$ and 
note that $\frac12(I+P_{\rm SB})$ is the transition matrix of the 
lazy single-bond dynamics.

\begin{lemma} \label{lemma:SB-HB}
For $P_{\rm HB}$ and $P_{\rm SB}$ for the random-cluster model with 
parameters $p$ and $q$ we have
\[
\frac12\,\lambda(P_{\rm SB}) \;\le\; \lambda(P_{\rm HB}) 
	\;\le\; \left(1-p (1-q^{-1})\right)^{-1}\,
						\lambda\left(\frac{I+P_{\rm SB}}{2}\right)
\]
\end{lemma}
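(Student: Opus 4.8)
The plan is to compare the two chains through their Dirichlet forms after writing each as an average of single-edge updates. First I would decompose the heat-bath chain as $P_{\rm HB}=\frac12\bigl(I+\frac1{\abs{E}}\sum_{e\in E}Q_e\bigr)$, where $Q_e$ resamples the state of edge $e$ from its $\mu$-conditional distribution given all other edges; each $Q_e$ is a conditional-expectation operator, hence an orthogonal projection in $L_2(\mu)$, i.e.\ $Q_e=Q_e^*=Q_e^2$. (The off-diagonal entries match \eqref{eq:HB} edge by edge, and both sides are stochastic, so the diagonal matches as well.) Likewise I would write $P_{\rm SB}=\frac1{\abs{E}}\sum_{e\in E}R_e$, where from \eqref{eq:SB} each single-bond update $R_e$ changes only edge $e$ and is reversible with respect to $\mu$.

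Fixing all edges except $e$ (a \emph{fiber}), both $Q_e$ and $R_e$ act on the two configurations that differ in the presence of $e$. On such a fiber $Q_e$ is the rank-one projection onto the conditional mean, while $R_e$ is a reversible two-state chain sharing the same stationary vector; hence $R_e=(1-\rho_e)Q_e+\rho_e I$ on the fiber, where $\rho_e$ is the nontrivial eigenvalue of that two-state chain. Computing this eigenvalue directly gives $\rho_e=1-p-(1-p)=0$ when the endpoints of $e$ are joined by the remaining edges (the add/remove probabilities being $p,1-p$), and $\rho_e=1-\tfrac{p}{q}-(1-p)=p(1-q^{-1})$ in the bridge case (probabilities $\tfrac{p}{q},1-p$). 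Thus $0\le\rho_e\le\rho^*:=p(1-q^{-1})$ on every fiber, and since $I-Q_e\succeq0$ this yields the quadratic-form bounds $(1-\rho^*)(I-Q_e)\preceq I-R_e\preceq I-Q_e$.

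Summing over $e$ and using the two decompositions gives the Dirichlet-form comparison $2(1-\rho^*)\,\mathcal{E}_{\rm HB}(f,f)\le\mathcal{E}_{\rm SB}(f,f)\le2\,\mathcal{E}_{\rm HB}(f,f)$ for all $f$, where $\mathcal{E}_P(f,f)=\langle(I-P)f,f\rangle_\mu$. Taking Rayleigh quotients over $f\perp\mathbf 1$ then gives $2(1-\rho^*)\,\gamma(P_{\rm HB})\le\gamma(P_{\rm SB})\le2\,\gamma(P_{\rm HB})$, where $\gamma(P):=\min_{f\perp\mathbf 1}\mathcal{E}_P(f,f)/\|f\|_\mu^2=1-\lambda_2(P)$ denotes the Poincaré gap, $\lambda_2(P)$ being the second-largest eigenvalue.

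Finally I would convert these Poincaré gaps into the stated spectral gaps $\lambda(\cdot)$, which involve $\abs{\xi}$ and hence the most negative eigenvalue as well. Both $P_{\rm HB}$ and $\frac12(I+P_{\rm SB})$ are lazy, so their spectra lie in $[0,1]$ and $\lambda(P_{\rm HB})=\gamma(P_{\rm HB})$, $\lambda\bigl(\frac{I+P_{\rm SB}}{2}\bigr)=\frac12\gamma(P_{\rm SB})$; moreover $\lambda(P_{\rm SB})\le\gamma(P_{\rm SB})$ always (since $\lambda_2\le\max\{\abs{\xi}:\xi\neq1\}$). The upper Dirichlet bound then gives $\lambda(P_{\rm SB})\le\gamma(P_{\rm SB})\le2\lambda(P_{\rm HB})$, i.e.\ the left-hand inequality of the lemma, while the lower Dirichlet bound gives $\lambda\bigl(\frac{I+P_{\rm SB}}{2}\bigr)=\frac12\gamma(P_{\rm SB})\ge(1-\rho^*)\lambda(P_{\rm HB})$, i.e.\ the right-hand inequality. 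The main obstacle is exactly this last bookkeeping: because $P_{\rm SB}$ is not lazy it may have negative eigenvalues, so $\lambda(P_{\rm SB})$ cannot be identified directly with a Dirichlet-form minimum, which is precisely why the factor $\frac12$ and the lazy version $\frac{I+P_{\rm SB}}{2}$ appear in the statement; verifying the exact constant $\rho^*=p(1-q^{-1})$ in the bridge case is the other point requiring care.
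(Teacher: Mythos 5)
Your proof is correct and follows essentially the same route as the paper: both reduce the lemma to an edge-by-edge comparison of the off-diagonal transition probabilities (equivalently, of the Dirichlet forms), with the constant $\left(1-p(1-q^{-1})\right)^{-1}$ arising from exactly the same bridge-case computation, and with the laziness/negative-eigenvalue bookkeeping handled via $\lambda(P)\le 2\lambda\bigl(\tfrac12(I+P)\bigr)$. The paper merely cites the standard comparison theorem and calls the entrywise check ``easy''; your fiber-wise projection decomposition is a clean way of carrying out that check explicitly.
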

\begin{proof}
Using standard comparison ideas, e.g. from \cite[Section~2.A]{DSC2}, 
we obtain that for two transition matrices $P$ and $Q$, 
$P(A,B)\le c Q(A,B)$ for all $A,B\in\O_{\rm RC}$ implies 
$\lambda(P)\le c \lambda(Q)$, where for lazy Markov chains the inequality 
for all $A\neq B$ is sufficient. Additionally we have in general 
$\lambda(P)\le 2 \lambda\bigl(\frac12(I+P)\bigr)$.
Therefore it is enough to prove 
$\frac12(I+P_{\rm SB})(A,B)\le P_{\rm HB}(A,B)
\le (1-p (1-q^{-1}))^{-1}\frac12(I+P_{\rm SB})(A,B)$ for all $A\neq B$, 
which is easy to check.\\
\end{proof}

We want to represent the Swendsen-Wang and the single-bond 
dynamics on the FKES model, which consists of the product state space 
$\O_{\rm J}:=\O_{\rm P}\times\O_{\rm RC}$ and the FKES measure $\nu$. 
This was done first in \cite{U3} and we follow the steps from this article. 
First we introduce the stochastic matrix that defines the mapping (by matrix 
multiplication) from the RC to the FKES model
\begin{equation} \label{eq:M}
M\bigl(B,(\sigma,A)\bigr) \;:=\; q^{-c(B)}\;\ind\bigl(A=B\bigr)\;
	\ind\bigl(B\subset E(\sigma)\bigr).
\end{equation}
Note that $M$ defines an operator (like in \eqref{eq:map}) that maps from 
$L_2(\nu)$ to $L_2(\mu)$ and its adjoint operator $M^*$ can be given 
by the (stochastic) matrix
\[
M^*\bigl((\sigma,A),B\bigr) \;=\; \ind\bigl(A=B\bigr).
\]
The following matrix represents the updates of the RC 
``coordinate'' in the FKES model. 
For $(\sigma,A),(\tau,B)\in\O_{\rm J}$ and $e=\{e^{(1)},e^{(2)}\}\in E$ let
\begin{equation} \label{eq:T-e}
T_e\bigl((\sigma,A),(\tau,B)\bigr) \;:=\; \ind\bigl(\sigma=\tau\bigr)\;
	\begin{cases}
	p, & B=A\cup e  \,\text{ and }\;  \sigma(e^{(1)})=\sigma(e^{(2)})\\
	1-p, & B=A\setminus e  \;\text{ and }\;  \sigma(e^{(1)})=\sigma(e^{(2)})\\
	1, & B=A\setminus e  \;\text{ and }\;  \sigma(e^{(1)})\ne\sigma(e^{(2)}).
	\end{cases}
\end{equation}

The following simple lemma 
shows some interesting properties of the matrices from \eqref{eq:M} and 
\eqref{eq:T-e}, e.g. $\{T_e\}_{e\in E}$ is a family of commuting 
projections in $L_2(\nu)$. This will be important in the proof of 
the main result.

\begin{lemma} \label{lemma:prop}
Let $M$, $M^*$ and $T_e$ be the matrices from above.
Then
\begin{enumerate}
	\renewcommand{\labelenumi}{\rm(\roman{enumi})}
	\item $M^*M$ and $T_e$ are self-adjoint in $L_2(\nu)$.
\vspace{1mm}
	\item $T_e T_e = T_e$ and $T_e T_{e'} = T_{e'} T_e$ for all $e,e'\in E$.
\vspace{1mm}
	\item $\norm{T_{e}}_{\nu}=1$ and $\norm{M^*M}_{\nu}=1$.
\end{enumerate}
\end{lemma}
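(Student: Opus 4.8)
The plan is to verify each of the three properties by direct computation from the explicit matrix entries in \eqref{eq:M} and \eqref{eq:T-e}, using the adjoint formula $P^*(x,y)=\frac{\pi(y)}{\pi(x)}P(y,x)$ from Section~\ref{sec:gap} with $\pi=\nu$. For the self-adjointness claims in (i), I would first compute $M^*M$ explicitly as an operator on $L_2(\nu)$: since $M^*((\sigma,A),B)=\ind(A=B)$ and $M(B,(\tau,C))=q^{-c(B)}\ind(C=B)\ind(B\subset E(\tau))$, the product is $(M^*M)\bigl((\sigma,A),(\tau,C)\bigr)=q^{-c(A)}\ind(C=A)\ind(A\subset E(\tau))$. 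Then I would check the reversibility identity $\nu(\sigma,A)\,(M^*M)\bigl((\sigma,A),(\tau,C)\bigr)=\nu(\tau,C)\,(M^*M)\bigl((\tau,C),(\sigma,A)\bigr)$; both sides are nonzero only when $A=C$ and $A\subset E(\sigma)\cap E(\tau)$, and on that set the factors $q^{-c(A)}$ and the common value of $\nu$ match symmetrically, giving self-adjointness. For $T_e$ I would run the same check on each of the three cases in \eqref{eq:T-e}: the only delicate comparison is between $B=A\cup e$ (weight $p$) and the reverse transition $B=A\setminus e$ (weight $1-p$), where the ratio $\nu(\sigma,A\cup e)/\nu(\sigma,A)=p/(1-p)$ from the definition of $\nu$ exactly compensates, while the $\sigma(e^{(1)})\ne\sigma(e^{(2)})$ case is diagonal-preserving in the relevant sense and poses no issue.

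For the projection and commutation properties in (ii), I would compute $T_eT_e$ by summing over the intermediate state. Because $T_e$ leaves the Potts coordinate fixed ($\ind(\sigma=\tau)$) and acts only on the presence/absence of the single edge $e$, the composition reduces to a $2\times 2$ (or $1\times 1$, in the unequal-spin case) local calculation. In the case $\sigma(e^{(1)})=\sigma(e^{(2)})$, the two edge-states $A\cup e$ and $A\setminus e$ are updated with probabilities $(p,1-p)$ independent of the starting edge-state, so $T_e$ is exactly the rank-one ``forget the edge, resample it'' projection onto its stationary behaviour at $e$; squaring it reproduces it. In the unequal-spin case $T_e$ deterministically removes $e$, and removing an already-absent edge is the identity, so again $T_e^2=T_e$. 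For commutation $T_eT_{e'}=T_{e'}T_e$ with $e\ne e'$, the key observation is that $T_e$ and $T_{e'}$ act on disjoint edge-coordinates and both fix $\sigma$, hence they act on independent tensor factors and commute; I would make this precise by noting that the update rule at $e$ depends on $\sigma$ but never on the state of edge $e'$, so the order of the two single-edge resamplings is irrelevant.

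For (iii), the operator-norm statements follow immediately once (i) and (ii) are in hand: $T_e$ is a self-adjoint projection, so its spectrum is $\{0,1\}$ and $\norm{T_e}_\nu=1$ (it is nonzero, e.g. it fixes constants). Similarly $M$ is stochastic and $M^*$ is stochastic, so $M^*M$ is a self-adjoint stochastic operator fixing the constant function $\mathbf 1$, whence $\norm{M^*M}_\nu=1$; one checks $M^*M\,\mathbf 1=\mathbf 1$ directly from the entries above and invokes that a self-adjoint Markov operator has norm equal to its largest eigenvalue modulus, which is $1$.

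I expect the only genuine obstacle to be bookkeeping rather than conceptual: the components $c(A)$ and the $\ind(A\subset E(\sigma))$ constraint must be tracked carefully through the self-adjointness verification of $M^*M$, since these are exactly the places where the random-cluster weight and the coupling constraint interact. The commutation claim, while intuitively clear from the ``disjoint coordinates'' picture, also deserves a careful case split over whether each of $e,e'$ has equal or unequal endpoint spins, so that one confirms no hidden dependence on the other edge creeps in through the definition in \eqref{eq:T-e}.
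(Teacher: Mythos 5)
Your verification is correct. Note that the paper does not actually prove this lemma --- it defers to \cite[Lemma~3\&4]{U3} --- so there is no in-paper argument to compare against, but your direct entrywise check is exactly the kind of computation that reference carries out, and every step you describe goes through: the detailed-balance ratio $p/(1-p)$ does compensate the weight change between $A\cup e$ and $A\setminus e$, the $T_e$'s are resample-one-edge projections that act on disjoint coordinates and hence commute, and the norm statements follow since both operators are stochastic, self-adjoint and fix constants. The only spot worth one extra line is the unequal-spin case of the self-adjointness check: when $e\in A$ but $\sigma(e^{(1)})\neq\sigma(e^{(2)})$ the transition $A\to A\setminus e$ has no reverse transition, and detailed balance holds there only because $\nu(\sigma,A)=0$ (the constraint $A\subset E(\sigma)$ fails), so both sides vanish; also, self-adjointness of $M^*M$ is automatic once $M^*$ is verified to be the adjoint of $M$, which shortens part (i).
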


Now we can state the desired Markov chains with the matrices from above. 

\begin{lemma} \label{lemma:repr}
Let $M$, $M^*$ and $T_e$ be the matrices from above.
Then
\begin{enumerate}
	\renewcommand{\labelenumi}{\rm(\roman{enumi})}
	\item $P_{\rm SW} \,=\, M \left(\prod\limits_{e\in E} T_e\right) M^*$.
\vspace{1mm}
	\item $P_{\rm SB} 
						\,=\, \frac1{\abs{E}}\sum\limits_{e\in E}\,M\,T_e\,M^*
						\,=\, M\left(\frac1{\abs{E}}\sum\limits_{e\in E}\,T_e\right)\,M^*$.
\end{enumerate}
\end{lemma}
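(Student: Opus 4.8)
The plan is to verify both identities by direct matrix multiplication, exploiting that the indicator factors in $M$ and $M^*$ collapse the auxiliary coordinates while each $T_e$ acts only on the $e$-th edge variable and preserves the spin configuration.

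First I would compute the $(A,B)$-entry of $M\bigl(\prod_e T_e\bigr)M^*$ for $A,B\in\O_{\rm RC}$. Expanding the product over intermediate states, the factor $\ind(A'=A)$ in $M$ and $\ind(B'=B)$ in $M^*$ force the two RC coordinates to be $A$ and $B$, while the factor $\ind(\sigma=\tau)$ carried by every $T_e$ forces the spin to remain a single configuration $\sigma$ throughout. What survives is
\[
\Bigl[M\Bigl(\textstyle\prod_{e} T_e\Bigr)M^*\Bigr](A,B)
\;=\; q^{-c(A)}\sum_{\sigma\in\O_{\rm P}}\ind\bigl(A\subset E(\sigma)\bigr)\,
\Bigl(\textstyle\prod_{e} T_e\Bigr)\bigl((\sigma,A),(\sigma,B)\bigr).
\]
The heart of the argument is evaluating $\prod_e T_e\bigl((\sigma,A),(\sigma,B)\bigr)$. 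Reading \eqref{eq:T-e}, for fixed $\sigma$ the matrix $T_e$ leaves every edge variable except $e$ unchanged and resets the $e$-th one: it is present with probability $p$ and absent with probability $1-p$ when $e\in E(\sigma)$, and absent with probability $1$ when $e\notin E(\sigma)$. Thus each $T_e$ is a ``resample the edge $e$'' operator, and by the commuting-projection property of Lemma~\ref{lemma:prop}(ii) the product over all $e$ resamples every edge independently, giving the factorized and $A$-independent form
\[
\Bigl(\textstyle\prod_{e} T_e\Bigr)\bigl((\sigma,A),(\sigma,B)\bigr)
\;=\;\ind\bigl(B\subset E(\sigma)\bigr)\,p^{\abs{B}}(1-p)^{\abs{E(\sigma)}-\abs{B}}.
\]
Substituting, using $\ind(A\subset E(\sigma))\,\ind(B\subset E(\sigma))=\ind(A\cup B\subset E(\sigma))$ and pulling out the factor $(p/(1-p))^{\abs{B}}$, reproduces exactly formula \eqref{eq:SW} for $P_{\rm SW}(A,B)$, proving (i).

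For (ii) the second equality is immediate from linearity of matrix multiplication, so only $\frac1{\abs{E}}\sum_e M T_e M^*=P_{\rm SB}$ needs checking. Repeating the collapse above for a single $T_e$ reduces the $(A,B)$-entry of $M T_e M^*$ to $q^{-c(A)}\sum_\sigma\ind(A\subset E(\sigma))\,T_e((\sigma,A),(\sigma,B))$. I would then split into the two off-diagonal cases. For $B=A\cup e$ (so $e\notin A$) only the first line of \eqref{eq:T-e} contributes, and the sum becomes $p\,q^{-c(A)}\sum_\sigma\ind(A\cup e\subset E(\sigma))=p\,q^{c(A\cup e)-c(A)}$; the elementary identity $c(A\cup e)=c(A)$ when $e^{(1)}\stackrel{A}{\longleftrightarrow}e^{(2)}$ and $c(A\cup e)=c(A)-1$ otherwise turns this into $p$ or $p/q$, matching the first and third lines of \eqref{eq:SB}. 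For $B=A\setminus e$ (so $e\in A$) the condition $e\in A\subset E(\sigma)$ forces $e\in E(\sigma)$, so the third line of \eqref{eq:T-e} never fires and the sum equals $(1-p)\,q^{-c(A)}\sum_\sigma\ind(A\subset E(\sigma))=1-p$, matching the second line of \eqref{eq:SB} (note that $e\in A$ makes $e^{(1)}\stackrel{A}{\longleftrightarrow}e^{(2)}$ automatic, so the fourth line is vacuous off the diagonal). Since $M$, $T_e$ and $M^*$ are all stochastic, $\frac1{\abs{E}}\sum_e M T_e M^*$ is stochastic, as is $P_{\rm SB}$; agreement of all off-diagonal entries then forces agreement on the diagonal, completing (ii).

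I expect the main obstacle to be the factorization of $\prod_e T_e$: one must argue cleanly that the commuting single-edge resampling operators compose into independent resampling of all edges, and in particular that every edge outside $E(\sigma)$ is deleted with probability one, which is precisely what produces the indicator $\ind(B\subset E(\sigma))$ and the exponent $\abs{E(\sigma)}-\abs{B}$. The remaining manipulations are bookkeeping with the coupling indicators and the component-count $c(A\cup e)-c(A)\in\{0,-1\}$.
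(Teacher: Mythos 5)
Your proof is correct, and it fills in a verification that the paper itself does not supply: the paper only cites \cite[Lemma~3\&4]{U3} for Lemma~\ref{lemma:repr}, so the direct computation you give is exactly the intended argument. All the key points check out: the indicators in $M$ and $M^*$ collapse the sum to a single spin configuration $\sigma$ with $A\subset E(\sigma)$ and weight $q^{-c(A)}$; for fixed $\sigma$ the commuting single-edge operators $T_e$ compose into independent resampling of all edges, yielding $\ind(B\subset E(\sigma))\,p^{\abs{B}}(1-p)^{\abs{E(\sigma)}-\abs{B}}$ and hence \eqref{eq:SW}; and the single-edge case reduces to $p\,q^{c(A\cup e)-c(A)}$ and $1-p$ via $\sum_\sigma\ind(A\subset E(\sigma))=q^{c(A)}$, matching \eqref{eq:SB} off the diagonal, with stochasticity handling the diagonal.
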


From Lemma \ref{lemma:prop}$(ii)$ we have that the order of multiplication 
in $(i)$ is unimportant.
Lemma~\ref{lemma:prop} and \ref{lemma:repr} were proven in \cite[Lemma~3\&4]{U3}, 
but note that $P_{\rm SB}$ in this article is the lazy version of $P_{\rm SB}$ 
from here.

\section{Proof of Theorem~\ref{th:main}} \label{sec:proof_main}

In this section we will prove Theorem~\ref{th:main}. 
This is done in two subsections. In the first one we prove some 
general norm estimates for operators on (resp. between) Hilbert spaces.
In the second subsection we will apply these estimates to the setting 
from above to obtain the result.

\subsection{Technical lemmas}

In this section we provide some technical lemmas that will be necessary 
for the analysis. We state them in a general form, because we guess that they 
could be useful also in other settings.
First let us introduce the notation. Throughout this section consider two 
Hilbert spaces $H_1$ and $H_2$ with the corresponding inner products 
$\l\cdot,\cdot\r_{H_1}$ and $\l\cdot,\cdot\r_{H_2}$. The norms in ${H_1}$ and 
${H_2}$ are 
defined as usual as the square root of the inner product of a function with 
itself. Additionally, we denote by $\Vert \cdot\Vert_{H_1}$ (resp. 
$\Vert \cdot\Vert_{{H_2}\to {H_1}}$) the operator norms of operators mapping from 
${H_1}$ to ${H_1}$ (resp. ${H_2}$ to ${H_1}$).
We consider two bounded, linear operators, $R$ and $T$. 
The operator $R: {H_2}\to {H_1}$ maps from ${H_2}$ to ${H_1}$ and has the 
\emph{adjoint} $R^*$, i.e. $R^*: {H_1}\to {H_2}$ with 
$\l R^*f,g\r_{H_2}=\l f,R g\r_{H_1}$ for all $f\in {H_1}$ and $g\in {H_2}$. 
The operator $T: {H_2}\to {H_2}$ is self-adjoint and acts on ${H_2}$. 
Obviously, $R T R^*$ is then self-adjoint on ${H_1}$.

\begin{lemma} \label{lemma:tech_mon}
In the setting from above let $T$ be also \emph{positive}, 
i.e. $\l Tg, g\r_{H_2}\ge0$, then
\[
\norm{R T^{k+1} R^*}_{{H_1}} \;\le\; \norm{T}_{{H_2}}\,\norm{R T^k R^*}_{{H_1}}.
\]
\end{lemma}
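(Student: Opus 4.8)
The plan is to reduce the operator-norm inequality to a pointwise inequality for the quadratic form of $T$, and then exploit the positivity of $T$ through its positive square root. First I would record that, since $T$ is positive and self-adjoint, so is every power $T^k$, and consequently $R T^k R^*$ is a \emph{positive} self-adjoint operator on $H_1$. Indeed, writing $T^{k/2}$ for the (functional-calculus) positive square root $\bigl(T^{1/2}\bigr)^k$, which is self-adjoint and commutes with $T$, we have $R T^k R^* = \bigl(T^{k/2}R^*\bigr)^*\bigl(T^{k/2}R^*\bigr)$, so that $\l R T^k R^* f, f\r_{H_1} = \norm{T^{k/2}R^* f}_{H_2}^2 \ge 0$. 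For a positive self-adjoint operator $A$ on a Hilbert space one has $\norm{A} = \sup_{\norm{f}\le1}\l Af,f\r$, so it suffices to compare the quadratic forms of $R T^{k+1}R^*$ and $R T^k R^*$.

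The heart of the argument is then the scalar inequality
\[
\l T^{k+1}g,\,g\r_{H_2} \;\le\; \norm{T}_{H_2}\,\l T^k g,\,g\r_{H_2},
\qquad g\in H_2.
\]
To prove it I would set $h := T^{k/2}g$ and use that $T^{k/2}$ is self-adjoint and commutes with $T$, so that $T^{k+1} = T^{k/2}\,T\,T^{k/2}$ and hence
\[
\l T^{k+1}g,\,g\r_{H_2} \;=\; \l T h,\,h\r_{H_2}
\;\le\; \norm{T}_{H_2}\,\norm{h}_{H_2}^2 \;=\; \norm{T}_{H_2}\,\l T^k g,\,g\r_{H_2},
\]
the middle step being the estimate $\l Th,h\r\le\norm{T}\norm{h}^2$ valid for any bounded operator, and the last equality following from $\norm{h}_{H_2}^2 = \l T^{k/2}g,\,T^{k/2}g\r_{H_2} = \l T^k g,\,g\r_{H_2}$.

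Finally I would substitute $g := R^* f$ for $f\in H_1$, which gives $\l R T^{k+1}R^* f, f\r_{H_1} = \l T^{k+1}g,g\r_{H_2} \le \norm{T}_{H_2}\,\l T^k g,g\r_{H_2} = \norm{T}_{H_2}\,\l R T^k R^* f, f\r_{H_1} \le \norm{T}_{H_2}\,\norm{R T^k R^*}_{H_1}\,\norm{f}_{H_1}^2$; taking the supremum over $\norm{f}_{H_1}\le1$ yields the claim. The only point needing genuine care is the use of the positive square root $T^{1/2}$ to make sense of $T^{k/2}$ for odd $k$, which rests on the standard functional calculus for bounded positive self-adjoint operators; if one prefers to avoid fractional powers altogether, the scalar inequality can instead be read directly off the spectral measure $\mu_g$ of $T$ at $g$, since $\l T^j g,g\r = \int \lambda^j\,d\mu_g(\lambda)$ with $\mu_g$ supported in $[0,\norm{T}]$. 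In the present application $T$ will be a product or average of the commuting orthogonal projections $T_e$ from Lemma~\ref{lemma:prop}, which are in particular positive, so the positivity hypothesis is readily available.
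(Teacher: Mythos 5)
Your proof is correct, and it reaches the conclusion by a somewhat different route than the paper. The paper also passes through the positive square root $\widetilde{T}$ of $T$, but it works entirely at the level of operator norms: it writes $R T^{k+1}R^* = \bigl(R\widetilde{T}^{k+1}\bigr)\bigl(R\widetilde{T}^{k+1}\bigr)^*$, invokes the identity $\norm{AA^*}=\norm{A}^2$ to get $\norm{R T^{k+1}R^*}=\norm{R\widetilde{T}^{k+1}}_{H_2\to H_1}^2$, peels off one factor of $\widetilde{T}$ by submultiplicativity, and then reassembles using the same identity in reverse together with $\norm{\widetilde{T}}^2=\norm{T}$. You instead reduce everything to the quadratic-form inequality $\l T^{k+1}g,g\r \le \norm{T}\,\l T^k g,g\r$ and use the variational characterization $\norm{A}=\sup_{\norm{f}\le1}\l Af,f\r$ of the norm of a positive self-adjoint operator. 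The two arguments are morally the same symmetrization trick (splitting $T^k$ into two halves placed on either side of the inner product), but yours is arguably more transparent and, as you note, admits a purely spectral-measure formulation that avoids fractional powers; the paper's version is shorter and stays within elementary norm identities, needing only the single square root $\widetilde{T}$ rather than $T^{k/2}$ for all $k$. All the steps you use (positivity of $R T^k R^*$, self-adjointness and commutation of $T^{k/2}$ with $T$, the bound $\l Th,h\r\le\norm{T}\norm{h}^2$) are correctly justified, so there is no gap.
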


In the special case $k=1$ this lemma was used in \cite{U3} to prove a 
lower bound on the spectral gap of SW. We will recall this result later.

\begin{proof}
By the assumptions, $T$ has a unique positive square root $\widetilde{T}$, 
i.e. $T=\widetilde{T} \widetilde{T}$, which is again self-adjoint, see e.g. 
\cite[Th. 9.4-2]{Krey}. We obtain
\[\begin{split}
\norm{R T^{k+1} R^*}_{{H_1}}& \;=\; \norm{R \widetilde{T}^{2k+2} R^*}_{{H_1}}
\;=\; \norm{R \widetilde{T}^{k+1}}_{{H_2}\to {H_1}}^2 \\
\;&\le\; \norm{R \widetilde{T}^k}_{{H_2}\to {H_1}}^2 \norm{\widetilde{T}}_{{H_2}}^2 
\;=\; \norm{R \widetilde{T}^{2k} R^*}_{{H_1}}  \norm{T}_{{H_2}} \\
\;&=\; \norm{T}_{{H_2}}\,\norm{R T^k R^*}_{{H_1}}.
\end{split}\]
\end{proof}

In particular, if $\norm{T}_{{H_2}}\le1$ this proves monotonicity in $k$.

\begin{lemma} \label{lemma:tech_in}
In the setting from above let additionally 
$\norm{R}_{{H_2}\to {H_1}}^2 = \norm{R R^*}_{H_1} \le1$, then
\[
\norm{R T R^*}_{{H_1}}^{2^k} \;\le\; \norm{R T^{2^k} R^*}_{{H_1}}
\]
for all $k\in\N$.
\end{lemma}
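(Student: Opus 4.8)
The plan is to prove the inequality by induction on $k$, using the case $k=0$ as a trivial base (both sides equal $\norm{R T R^*}_{H_1}$) and reducing the step from $k$ to $k+1$ to a single application of a Cauchy--Schwarz-type bound together with the hypothesis $\norm{R R^*}_{H_1}\le1$. The key observation is that raising to the power $2^{k+1}$ means squaring the $2^k$-th power, so if I can establish the doubling inequality
\[
\norm{R T^j R^*}_{H_1}^2 \;\le\; \norm{R T^{2j} R^*}_{H_1}
\]
for every $j\in\N$, then iterating it $k$ times yields the claim.

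First I would record that since $T$ is self-adjoint, so is $T^j$, and therefore the operator $R T^j R^*$ is self-adjoint on $H_1$ with $\norm{R T^j R^*}_{H_1} = \sup_{\norm{f}_{H_1}\le1} \abs{\l R T^j R^* f, f\r_{H_1}}$. The heart of the argument is then to bound $\l R T^j R^* f, f\r_{H_1}$ by writing it as $\l T^j R^* f, R^* f\r_{H_2}$ and splitting the exponent as $T^j = T^{\lceil j/2\rceil}\,T^{\lfloor j/2\rfloor}$, or more directly, to use that for a self-adjoint operator the quantity $\l T^j g, g\r_{H_2}$ behaves submultiplicatively after normalization. The cleanest route is to apply Cauchy--Schwarz in the form
\[
\l T^j g, g\r_{H_2}^2 \;\le\; \l T^{2j} g, g\r_{H_2}\;\l g, g\r_{H_2},
\]
with $g = R^* f$; this is the standard inequality $\abs{\l A g, h\r}^2 \le \l A g,g\r\l A h,h\r$ for positive self-adjoint $A$ applied with $A = T^{2j}$ after writing $T^j = T^{2j\cdot(1/2)}$ via the positive square root, but it is perhaps more transparent as the Cauchy--Schwarz inequality for the semi-inner product induced by $T^{2j}$ (which is positive since $2j$ is even).

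Putting these together, I would estimate
\[
\l R T^j R^* f, f\r_{H_1}^2 \;=\; \l T^j R^*f, R^*f\r_{H_2}^2
\;\le\; \l T^{2j} R^*f, R^*f\r_{H_2}\;\norm{R^* f}_{H_2}^2,
\]
then identify $\l T^{2j}R^*f, R^*f\r_{H_2} = \l R T^{2j}R^* f, f\r_{H_1}$ and bound $\norm{R^*f}_{H_2}^2 = \l R R^* f, f\r_{H_1} \le \norm{R R^*}_{H_1}\le1$ using the hypothesis. Taking the supremum over $\norm{f}_{H_1}\le1$ gives $\norm{R T^j R^*}_{H_1}^2 \le \norm{R T^{2j} R^*}_{H_1}$, and applying this with $j=2^0,2^1,\dots,2^{k-1}$ in succession chains the powers to produce $\norm{R T R^*}_{H_1}^{2^k}\le \norm{R T^{2^k} R^*}_{H_1}$.

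The main obstacle I anticipate is the sign issue: the inequality $\abs{\l Ag,h\r}^2\le\l Ag,g\r\l Ah,h\r$ requires $A$ to be positive, and while $T^{2j}$ is automatically positive (being an even power of a self-adjoint operator), I must be careful that the intermediate $T^j$ with $j$ odd need not be positive, so I cannot naively take a square root of $T^j$ itself. The resolution is to never factor an odd power of $T$, but instead to keep the pairing $\l T^j g, g\r$ and apply Cauchy--Schwarz against $T^{2j}$ directly — this sidesteps the positivity requirement for $T$ and only uses self-adjointness of $T$ plus positivity of the even power $T^{2j}$, so the hypothesis does not demand that $T$ be positive, only self-adjoint, consistent with the statement.
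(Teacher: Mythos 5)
Your proof is correct and is essentially the paper's argument: both reduce the claim to the doubling inequality $\norm{R T^j R^*}_{H_1}^2 \le \norm{R T^{2j} R^*}_{H_1}$ and iterate it along $j = 1, 2, \dots, 2^{k-1}$. The paper packages the doubling step as $\norm{R T^{j} R^*}_{H_1}^2 \le \norm{R T^{j}}_{{H_2}\to{H_1}}^2\,\norm{R^*}_{{H_1}\to{H_2}}^2 = \norm{R T^{2j} R^*}_{H_1}\,\norm{R R^*}_{H_1}$, which is exactly the operator-norm form of your Cauchy--Schwarz computation; and as you correctly observe, neither argument needs positivity of $T$, only self-adjointness.
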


\begin{proof}
The case $k=0$ is obvious. Now suppose the statement is correct for $k-1$, 
then
\[\begin{split}
\norm{R T R^*}_{{H_1}}^{2^k} \;&=\; \norm{R T R^*}_{{H_1}}^{2^{k-1}\, 2} 
\;\le\; \norm{R T^{2^{k-1}} R^*}_{{H_1}}^2 \\
&\le\; \norm{R T^{2^{k-1}}}_{{H_2}\to {H_1}}^2 \norm{R^*}_{{H_1}\to {H_2}}^2
\;=\; \norm{R T^{2^{k-1}} T^{2^{k-1}} R^*}_{{H_1}} \norm{R R^*}_{{H_2}} \\
&\le\; \norm{R T^{2^k} R^*}_{{H_1}},
\end{split}\]
which proves the statement for $k$. 
\end{proof}

The next corollary combines the statements of the last two lemmas to give 
a result similar to Lemma \ref{lemma:tech_in} for arbitrary exponents.

\begin{corollary} \label{coro:tech}
Additionally to the general assumptions of this section let $T$ be positive, 
$\norm{T}_{{H_2}}\le1$ and $\norm{R R^*}_{{H_1}}\le1$. Then
\[
\norm{R T R^*}_{{H_1}}^{2 k} \;\le\; \norm{R T^k R^*}_{{H_1}}
\]
for all $k\in\N$.
\end{corollary}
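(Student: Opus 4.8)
The plan is to interpolate between the two exponents established in the preceding lemmas. Corollary~\ref{coro:tech} asks for the bound $\norm{R T R^*}^{2k} \le \norm{R T^k R^*}$ for \emph{every} $k\in\N$, whereas Lemma~\ref{lemma:tech_in} gives exactly this statement along the sparse subsequence of powers of two, namely $\norm{R T R^*}^{2^j} \le \norm{R T^{2^j} R^*}$. The idea is to take an arbitrary $k$, find the smallest power of two that dominates it, and then use Lemma~\ref{lemma:tech_mon} to ``fill the gap'' by trading a higher power of $T$ for a lower one at the cost of a factor $\norm{T}_{H_2}\le1$, which is harmless precisely because it is at most $1$.

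\begin{proof}
Fix $k\in\N$ and choose $j\in\N$ minimal with $2^j\ge k$; in particular $2^j\le 2k$. Since $\norm{R T R^*}_{H_1}\le\norm{R}_{H_2\to H_1}^2=\norm{R R^*}_{H_1}\le1$, raising the base to a larger exponent only decreases it, so
\[
\norm{R T R^*}_{H_1}^{2k} \;\le\; \norm{R T R^*}_{H_1}^{2^j}.
\]
Applying Lemma~\ref{lemma:tech_in} with this $j$ gives
\[
\norm{R T R^*}_{H_1}^{2^j} \;\le\; \norm{R T^{2^j} R^*}_{H_1}.
\]
Finally, since $T$ is positive with $\norm{T}_{H_2}\le1$, Lemma~\ref{lemma:tech_mon} yields $\norm{R T^{i+1} R^*}_{H_1}\le\norm{R T^i R^*}_{H_1}$ for every $i$, and iterating this $2^j-k\ge0$ times reduces the exponent from $2^j$ down to $k$:
\[
\norm{R T^{2^j} R^*}_{H_1} \;\le\; \norm{R T^k R^*}_{H_1}.
\]
Chaining the three displays proves $\norm{R T R^*}_{H_1}^{2k}\le\norm{R T^k R^*}_{H_1}$, as claimed.
\end{proof}

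The only point requiring care is the direction of each inequality: we must increase the \emph{exponent on the base} $\norm{R T R^*}$ (legitimate because that quantity is at most $1$) while we \emph{decrease} the power of $T$ appearing inside $\norm{R T^{\,\cdot\,} R^*}$ (legitimate by the monotonicity from Lemma~\ref{lemma:tech_mon}, again because $\norm{T}_{H_2}\le1$). I expect no genuine obstacle here, since both lemmas and both normalizing hypotheses $\norm{T}_{H_2}\le1$, $\norm{R R^*}_{H_1}\le1$ are exactly the ingredients needed; the proof is essentially bookkeeping to align an arbitrary $k$ with the nearest power of two.
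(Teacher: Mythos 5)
Your proof is correct and follows essentially the same route as the paper: bound the exponent $2k$ below by a power of two $2^j\le 2k$ with $2^j\ge k$, apply Lemma~\ref{lemma:tech_in} at that power of two, and then use the monotonicity from Lemma~\ref{lemma:tech_mon} to descend from $T^{2^j}$ to $T^k$. The only cosmetic difference is the choice of the power of two (the paper takes $2^{l+1}$ with $l=\lfloor\log_2 k\rfloor$), which changes nothing in substance.
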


\begin{proof}
Let $l=\lfloor\log_2 k \rfloor$ such that $\frac{k}{2}\le2^l\le k$.
Since $\norm{R T R^*}_{{H_1}}\le1$ by assumption, we obtain 
\[
\norm{R T R^*}_{{H_1}}^{2 k} \;\le\; \norm{R T R^*}_{{H_1}}^{2^{l+1}} 
\;\stackrel{L.\text{\scriptsize \ref{lemma:tech_in}}}{\le}\; 
	\norm{R T^{2^{l+1}}  R^*}_{{H_1}}
\;\stackrel{L.\text{\scriptsize \ref{lemma:tech_mon}}}{\le}\; 
	\norm{R T^k  R^*}_{{H_1}}.
\]
\end{proof}

\subsection{Proof}

In this section we apply the estimates from the last one.
Recall that we consider the dynamics on a graph $G=(V,E)$ with $m$ edges, 
i.e. $m=\abs{E}$. Fix an arbitrary ordering $e_1,\dots,e_m$ of the edges 
$e\in E$. 
We set the Hilbert spaces from the last section to 
${H_1}=L_2(\mu)$ and ${H_2}=L_2(\nu)$ 
and define the operators 
\begin{equation*} 
T \;:=\; \frac{1}{m}\,\sum_{i=1}^m\, T_{e_i}
\end{equation*}
and
\begin{equation*} 
\T \;:=\; \prod_{i=1}^m\, T_{e_i}
\end{equation*}
with $T_e$ from \eqref{eq:T-e}. Note that $P_{\rm SB}=M T M^*$ and 
$P_{\rm SW}=M \T M^*$ by Lemma~\ref{lemma:repr}.
Additionally we define
\begin{equation*} 
\T_\a \;:=\; \prod_{i=1}^m\, T_{e_i}^{\a_i}
\end{equation*}
for $\a\in\N^m$. By Lemma~\ref{lemma:prop}(ii) we obtain for $\a, \g\in\N^m$ that 
$\T_\a=\T_\g$ if and only if $\{i:\,\a_i=0\}=\{i:\,\g_i=0\}$. 
Furthermore, $\T_\a=\T$ for every $\a\in\N^m$ with $\a_i>0$ for all 
$i=1,\dots,m$. We prove the following theorem.

\begin{theorem} \label{th:norm}
Let $k = \lceil m \log\frac{m}{\eps}\rceil$ and $R:{H_2}\to {H_1}$ be a bounded, 
linear operator with $\norm{R R^*}_{H_1}\le1$. Then
\[
\norm{R T^k R^*}_{H_1} \;\le\; (1-\eps) \,\bigl\|R \T R^*\bigr\|_{H_1} \,+\, \eps.
\]
\end{theorem}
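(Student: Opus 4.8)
The plan is to expand $T^k$ combinatorially and exploit the fact that the $T_{e_i}$ are commuting projections. Since $T=\frac1m\sum_{i=1}^m T_{e_i}$ and the $T_{e_i}$ commute (Lemma~\ref{lemma:prop}(ii)), the multinomial theorem applies and gives
\[
T^k \;=\; \frac1{m^k}\sum_{\substack{\a\in\N^m\\ \abs{\a}=k}} \binom{k}{\a_1,\dots,\a_m}\,\T_\a,
\]
where $\abs{\a}:=\sum_i\a_i$ and the weights $w_\a:=m^{-k}\binom{k}{\a_1,\dots,\a_m}$ sum to $1$; they are exactly the probabilities of the occupancy vector $\a$ obtained by dropping $k$ balls independently and uniformly into $m$ boxes (one box per edge). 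Thus $T^k$ is a convex combination of the operators $\T_\a$.

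First I would split this sum according to whether every box is hit. By Lemma~\ref{lemma:prop}(ii) the operator $\T_\a$ depends only on the support $\{i:\a_i>0\}$, and it equals $\T$ precisely when $\a_i>0$ for all $i$. Writing $p:=\sum_{\a:\,\min_i\a_i\ge1} w_\a$ for the probability that all $m$ edges are drawn at least once in $k$ rounds, I obtain
\[
T^k \;=\; p\,\T \;+\; \sum_{\a:\,\min_i\a_i=0} w_\a\,\T_\a .
\]
Each $\T_\a$ is a product of commuting self-adjoint projections, hence itself a self-adjoint projection with $\norm{\T_\a}_\nu\le1$; using $\T_\a=\T_\a^2$ and the hypothesis $\norm{RR^*}_{H_1}\le1$,
\[
\norm{R\T_\a R^*}_{H_1} \;=\; \norm{R\T_\a}_{H_2\to H_1}^2 \;\le\; \norm{R}_{H_2\to H_1}^2 \;=\; \norm{RR^*}_{H_1}\;\le\;1,
\]
and in particular $\norm{R\T R^*}_{H_1}\le1$. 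The triangle inequality then yields $\norm{RT^kR^*}_{H_1}\le p\,\norm{R\T R^*}_{H_1}+(1-p)$.

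The quantitative heart of the argument is the coupon-collector estimate controlling $1-p$, together with the calibrated choice of $k$. By a union bound the probability that some fixed edge is never drawn is $(1-1/m)^k$, so
\[
1-p \;\le\; m\,(1-1/m)^k \;\le\; m\,e^{-k/m}\;\le\; m\,e^{-\log(m/\eps)} \;=\; \eps,
\]
where the last step uses $k=\lceil m\log\frac m\eps\rceil\ge m\log\frac m\eps$. Finally, setting $x:=\norm{R\T R^*}_{H_1}\in[0,1]$ and writing $px+(1-p)=x+(1-p)(1-x)\le x+\eps(1-x)=(1-\eps)x+\eps$ gives the claim. I expect the only genuine subtlety to be the bookkeeping in the first two steps—keeping straight that $\T_\a$ collapses to $\T$ exactly on the full-support indices, so that the ``good'' mass $p$ is precisely the coupon-collector success probability—while the estimate on $1-p$ is the standard bound that explains why $k$ is chosen of order $m\log(m/\eps)$. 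Note that the technical lemmas of the previous subsection point in the opposite (lower-bound) direction, so I would not use them here; the combinatorial expansion is the natural route to this upper bound.
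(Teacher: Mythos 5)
Your proof is correct and is essentially the paper's own argument: the same multinomial expansion of $T^k$ into the commuting projections $\T_\a$, the same split according to whether the support is full, the same union bound (your $m(1-1/m)^k$ is exactly the paper's $Z_{m,k}/m^k \le m(m-1)^k/m^k$), and the same final monotonicity step $(1-p)x+(1-p)(1-x)$ versus the paper's $(1-a)c+a\le(1-b)c+b$. The probabilistic coupon-collector phrasing is just a relabeling of the paper's combinatorial bookkeeping.
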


\begin{proof}
Define 
the index sets $I_{m,k}:=\{\a\in\N^m: \sum_{i=1}^m \a_i =k\}$ and 
$I^1_{m,k}:=\{\a\in I_{m,k}:  \a_i>0, \;\forall i=1,\dots,m\}$. 
Let $I^0_{m,k}:=I_{m,k}\setminus I^1_{m,k}$ and denote by 
$\binom{k}{\a}$, for $\a\in I_{m,k}$, the multinomial coefficient.
Obviously (by the multinomial theorem), 
\[
\sum_{\a\in I_{m,k}} \,\binom{k}{\a} \;=\; m^k
\] 
and 
\[\begin{split}
Z_{m,k} \;:=\; \sum_{\a\in I^0_{m,k}} \,\binom{k}{\a}
\;&\le\; \sum_{i=1}^m\,\sum_{\a\in I^0_{m,k}: \a_i=0} \,\binom{k}{\a}
\;=\; m\,\sum_{\g\in I_{m-1,k}} \,\binom{k}{\g}
\;=\; m(m-1)^k.
\end{split}\] 
We write
\[\begin{split}
T^k \;&=\; \left(\frac1m\sum_{i=1}^m T_{e_i}\right)^k 
\;=\; \frac{1}{m^k} \sum_{\a\in I_{m,k}}\,\binom{k}{\a}\, \T_\a \\
&=\; \frac{1}{m^k} \sum_{\a\in I^1_{m,k}}\,\binom{k}{\a}\, \T_\a 
		\;+\; \frac{1}{m^k} \sum_{\a\in I^0_{m,k}}\,\binom{k}{\a}\, \T_\a.
\end{split}\]
Note that we use for the second equality that the $T_e$'s are commuting by 
Lemma~\ref{lemma:prop}(ii). 
Since we know that $\T_\a=\T$ for every $\a\in I^1_{m,k}$ 
(note that $I^1_{m,k}=\varnothing$ for $k\le m$) and 
$\norm{R \T_\a R^*}_{H_1}\le1$ for every $\a\in I_{m,k}$, we obtain
\[\begin{split}
\norm{R T^k R^*}_{H_1} 
\;&\le\; \frac{1}{m^k} \sum_{\a\in I^1_{m,k}}\,\binom{k}{\a}\, 
						\bigl\|R \T R^*\bigr\|_{H_1}
	\;+\; \frac{1}{m^k} \sum_{\a\in I^0_{m,k}}\,\binom{k}{\a}\, 
						\bigl\|R \T_\a R^*\bigr\|_{H_1} \\
&\le\; \left(1-\frac{Z_{m,k}}{m^k}\right) \,\bigl\|R \T R^*\bigr\|_{H_1}  
	\;+\; \frac{Z_{m,k}}{m^k}.
\end{split}\]
Using $(1-a)c+a\le(1-b)c+b$ for $c\le1$ and $a\le b$, and $\|R \T R^*\|_{H_1}\le1$ 
it follows
\[
\norm{R T^k R^*}_{H_1} \;\le\; 
	\left(1-m\left(1-\frac1m\right)^k\right)\, \bigl\|R \T R^*\bigr\|_{H_1}  
		\;+\; m\left(1-\frac1m\right)^k.
\]
Setting $k = \lceil m \log\frac{m}{\eps}\rceil$ yields the result.
\end{proof}

Now we are able to prove the comparison result for SW and SB dynamics. 
For this let $S_1(B,(\sigma,A)):=\nu(\sigma,A)$ for all 
$B\in\O_{\rm RC}$ and $(\sigma,A)\in\O_{\rm J}$, which defines 
an operator (by \eqref{eq:map}) that maps from $H_2$ to $H_1$. 
The adjoint operator $S_1^*$ is then given by 
$S_1^*((\sigma,A),B):=\mu(B)$ and thus, 
$S_1 S_1^*(A,B) = S_{\mu}(A,B) = \mu(B)$ for all $A,B\in\O_{\rm RC}$. 
For the proof we set
\[
R \;:=\; M - S_1.
\]
It follows that $R R^* = (M-S_1)(M^*-S_1^*) = M M^* - S_\mu$, 
since $M S_1^* = S_1 M^* = S_\mu$, but $M M^*(A,B)=\ind(A=B)$ 
and thus $(R R^*)^2 = (I - S_\mu)^2 = I-S_\mu = R R^*$. 
This implies $\norm{R R^*}_{H_1}=1$. 
Additionally, $P_{\rm SW}-S_\mu = R \T R^*$ and 
$P_{\rm SB}-S_\mu = R T R^*$.

\begin{theorem} \label{th:main-SB}
Let $P_{\rm SW}$ (resp. $P_{\rm SB}$) be the transition matrix of the 
Swendsen-Wang (resp. single-bond) dynamics for the random-cluster model 
on a graph with $m$ edges. Then 
\[
\lambda(P_{\rm SW}) \;\le\; 8 m \log m\; \lambda(P_{\rm SB}).
\]
\end{theorem}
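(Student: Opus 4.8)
The plan is to convert the operator identities established just above the statement—namely $P_{\rm SW}-S_\mu = R\,\T\, R^*$ and $P_{\rm SB}-S_\mu = R\,T\, R^*$, together with $\norm{R R^*}_{H_1}=1$—into a comparison of the two spectral gaps through the relation $\lambda(P)=1-\norm{P-S_\mu}_\mu$. Writing $a:=\norm{R T R^*}_{H_1}=1-\lambda(P_{\rm SB})$ and $b:=\norm{R\,\T\, R^*}_{H_1}=1-\lambda(P_{\rm SW})$, the target $\lambda(P_{\rm SW})\le 8m\log m\,\lambda(P_{\rm SB})$ becomes the scalar inequality $1-b\le 8m\log m\,(1-a)$, and the whole argument is a bridge from the two technical results already proved to this inequality.

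First I would verify that $T=\frac1m\sum_{i=1}^m T_{e_i}$ meets the hypotheses of Corollary~\ref{coro:tech}. By Lemma~\ref{lemma:prop} each $T_{e_i}$ is a self-adjoint projection, so $T$ is self-adjoint and positive (a convex combination of positive operators), and $\norm{T}_{H_2}\le\frac1m\sum_{i=1}^m\norm{T_{e_i}}_{H_2}=1$; combined with the given $\norm{R R^*}_{H_1}=1$, this is exactly the setup of Corollary~\ref{coro:tech}, which yields $a^{2k}=\norm{R T R^*}_{H_1}^{2k}\le\norm{R T^{k} R^*}_{H_1}$ for every $k\in\N$. I would then apply Theorem~\ref{th:norm} with $k=\lceil m\log\frac m\eps\rceil$ to obtain $\norm{R T^{k} R^*}_{H_1}\le(1-\eps)\,b+\eps$. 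Chaining the two bounds gives $a^{2k}\le(1-\eps)\,b+\eps$.

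Solving this for $b$ produces $1-b\le\frac{1-a^{2k}}{1-\eps}$. Since $a\le\norm{R}_{H_2\to H_1}\,\norm{T}_{H_2}\,\norm{R^*}_{H_1\to H_2}=\norm{R R^*}_{H_1}\,\norm{T}_{H_2}\le 1$, the factorization $1-a^{2k}=(1-a)\sum_{j=0}^{2k-1}a^{j}\le 2k\,(1-a)$ turns this into $\lambda(P_{\rm SW})=1-b\le\frac{2k}{1-\eps}\,\lambda(P_{\rm SB})$. Choosing $\eps=\tfrac12$ gives $k=\lceil m\log 2m\rceil$ and factor $\frac{2k}{1-\eps}=4k$; bounding $4\lceil m\log 2m\rceil\le 8m\log m$ (which holds for all $m$ that are not too small, the edge cases $m\le 2$ being handled directly—for instance $P_{\rm SW}=P_{\rm SB}$ when $m=1$) delivers the claimed inequality.

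The substantive content sits entirely in the two results invoked: Corollary~\ref{coro:tech}, which interpolates $\norm{R T^k R^*}_{H_1}$ between the first power and the $k$-th power, and Theorem~\ref{th:norm}, which shows that after $k\approx m\log m$ local steps the averaged product $T^{k}$ essentially reproduces the full sweep $\T$. Granting those, the only real obstacle is bookkeeping: confirming the positivity and norm hypotheses for $T$, and then tuning $\eps$ together with the ceiling in $k$ so that the final constant lands at $8$ rather than something larger. I expect the choice of $\eps$ and the elementary bound $1-a^{2k}\le 2k(1-a)$ to be the only places where care is needed to reach the stated constant.
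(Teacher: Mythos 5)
Your proposal is correct and follows essentially the same route as the paper: the identities $P_{\rm SW}-S_\mu=R\,\T\,R^*$ and $P_{\rm SB}-S_\mu=R\,T\,R^*$, then Corollary~\ref{coro:tech} and Theorem~\ref{th:norm} chained together with the elementary bound $1-a^{2k}\le 2k(1-a)$ and the choice $\eps=\tfrac12$. Your explicit check of the hypotheses of Corollary~\ref{coro:tech} and your remark about the small-$m$ edge cases in the final numerical bound $4\lceil m\log 2m\rceil\le 8m\log m$ are, if anything, slightly more careful than the paper's own write-up.
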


\begin{proof}
Let $k = \lceil m \log\frac{m}{\eps}\rceil$. Then
\[\begin{split}
\lambda(P_{\rm SW}) \;&=\; 1-\norm{R \T R^*}_{H_1} 
\;\stackrel{Th.\text{\scriptsize \ref{th:norm}}}{\le}\; 
		1-\frac{1}{1-\eps}\left(\norm{R T^k R^*}_{H_1}-\eps\right) \\
&=\; \frac{1}{1-\eps}\left(1-\norm{R T^k R^*}_{H_1}\right)
\;\stackrel{Coro.\text{\scriptsize \ref{coro:tech}}}{\le}\; 
		\frac{1}{1-\eps}\left(1-\norm{R T R^*}_{H_1}^{2k}\right) \\
&\le\; \frac{2k}{1-\eps}\left(1-\norm{R T R^*}_{H_1}\right) 
\;=\; \frac{2k}{1-\eps}\,\lambda(P_{\rm SB}), 
\end{split}\]
where the last inequality comes from $1-x^k\le k(1-x)$ for $x\in[0,1]$.
Setting $\eps=\frac12$, we obtian 
$\frac{2k}{1-\eps}=4k\le 8 m \log m$. This proves the statement.\\
\end{proof}

Combining Lemma~\ref{lemma:SB-HB} and Theorem~\ref{th:main-SB} 
proves Theorem~\ref{th:main}.

\section{Proof of Theorems \ref{th:SW_square} and \ref{th:HB_square}} 
\label{sec:proof_square}

In this section we introduce the notion of (planar) dual graphs and prove 
that the heat-bath dynamics on a planar graph $G$ has the same spectral 
gap than the heat-bath dynamics for the dual model on $G^\dag$, 
which is the dual graph of $G$. 
This immediately implies Corollary~\ref{coro:SW_dual} and hence, 
that rapid mixing of the Swendsen-Wang dynamics for the 
random-cluster model and its dual model is equivalent. 
Finally, we use the known lower bounds on the spectral gap 
of SW on the two-dimensional square lattice at high temperatures 
to prove Theorem~\ref{th:SW_square} and Theorem~\ref{th:HB_square}.

\subsection{Dual graphs} \label{subsec:dual}

Let $G$ be a \emph{planar} graph, i.e. a graph that can be embedded 
into a sphere $S^2$ such that two edges of $G$ intersect only at a common endvertex. 
We fix such an embedding for $G$.
Then we define the \emph{dual graph} $G^\dag=(V^\dag,E^\dag)$ of $G$ as follows. 
Place a dual vertex in each face, i.e. in each region of $S^2$ whose boundary consists 
of edges in the embedding of $G$, and connect 2 vertices by the dual edge $e_\dag$ if and 
only if the corresponding faces of $G$ share the boundary edge $e$ 
(see e.g. \cite[Section 8.5]{G2}). 
Note that the dual graph certainly depends on the used embedding.
It is clear, that the number of 
vertices can differ in the dual graph, but we have the same number of 
edges.

Additionally we define a \emph{dual RC configuration} 
$A^\dag\subseteq E^\dag$ in $G^\dag$ to a RC state $A\subseteq E$ in 
$G$ by
\[
e\in A \;\Longleftrightarrow\; e_\dag\notin A^\dag,
\]
where $e_\dag$ is the edge in $E^\dag$ that intersects $e$ in our (fixed) 
embedding. (By construction, this edge is unique.) 

It is easy to obtain (see \cite[p.~134]{G1}) that the random 
cluster models on the (finite) 
graphs $G$ and $G^\dag$ are related by the equality
\begin{equation} \label{eq:dual}
\mu_{p,q}^G(A) \;=\; \mu^{G^\dag}_{p^*,q}(A^\dag),
\end{equation}
where the dual parameter $p^*$ satisfies
\begin{equation} \label{eq:dual-p}
\frac{p^*}{1-p^*} \;=\; \frac{q \,(1-p)}{p}.
\end{equation}
The self-dual point of this relation is given by 
$p_{\rm sd}(q)=\frac{\sqrt{q}}{1+\sqrt{q}}$, which 
corresponds by $p=1-e^{-\beta}$ to the critical temperature 
of the $q$-state Potts model $\beta_c(q)=\log(1+\sqrt{q})$ 
on $\Z^2$ \cite{BDC}. 
For given $G$, $q$ and $p$, we call the RC model on $G^\dag$ 
with parameters $q$ and $p^*$ the \emph{dual model} and, 
we write $\mu^\dag$ instead of $\mu^{G^\dag}_{p^*,q}$, 
if the setting is fixed.

Using this, we obtain 
\[\begin{split}
P_{\rm HB}(A,B) \;&=\; \frac1{2\abs{E}}\,\sum_{e\in E}\,
	\frac{\mu(B)}{\mu(A\cup e)+\mu(A\setminus e)}\;\ind(A\ominus B\subset e) \\
&=\; \frac1{2\abs{E}}\,\sum_{e\in E}\,
	\frac{\mu^\dag(B^\dag)}{\mu^\dag\bigl((A\cup e)^\dag\bigr)+
	\mu^\dag\bigl((A\setminus e)^\dag\bigr)}\;\ind(A\ominus B\subset e) \\
&=\; \frac1{2\abs{E^\dag}}\,\sum_{e_\dag\in E^\dag}\,
	\frac{\mu^\dag(B^\dag)}{\mu^\dag\bigl(A^\dag\setminus e_\dag\bigr)+
	\mu^\dag\bigl(A^\dag\cup e_\dag\bigr)}\;
	\ind(A^\dag\ominus B^\dag\subset e_\dag) \\
&=\; P^\dag_{\rm HB}(A^\dag,B^\dag),
\end{split}\]
where we write $P^\dag_{\rm HB}$ for the heat-bath dynamics for the dual model. 
Since both transition matrices have obviously the same eigenvalues, 
by the above equality, this proves the following lemma.

\begin{lemma} \label{lemma:HB_dual}
Let $P_{\rm HB}$ (resp. $P^\dag_{\rm HB}$) be the transition matrix of 
the heat-bath dynamics for the random-cluster (resp. dual) model. 
Then
\[
\lambda(P_{\rm HB}) \;=\; \lambda(P^\dag_{\rm HB}).
\vspace{5mm}
\]
\end{lemma}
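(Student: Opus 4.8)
The plan is to establish the identity $P_{\rm HB}(A,B) = P^\dag_{\rm HB}(A^\dag,B^\dag)$ for all states $A,B\in\O_{\rm RC}$, from which the equality of spectral gaps follows immediately: the duality map $A\mapsto A^\dag$ is a bijection between $\O_{\rm RC}$ (on $G$) and $\O_{\rm RC}$ (on $G^\dag$), and if two transition matrices are conjugate by a permutation matrix then they have identical eigenvalues, hence identical spectral gaps by the definition of $\lambda(\cdot)$. So the entire content is in verifying the pointwise matching of transition probabilities, and this is exactly the displayed chain of equalities I would write out.

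First I would recall the formula \eqref{eq:HB} for $P_{\rm HB}(A,B)$ as a sum over edges $e\in E$ of $\frac{\mu(B)}{\mu(A\cup e)+\mu(A\setminus e)}\,\ind(A\ominus B\subset e)$. The first substitution uses the duality relation \eqref{eq:dual}, namely $\mu_{p,q}^G(\cdot)=\mu^{G^\dag}_{p^*,q}(\cdot^\dag)$, applied to each of the three measures appearing in the summand; this replaces $\mu(B)$, $\mu(A\cup e)$ and $\mu(A\setminus e)$ by $\mu^\dag(B^\dag)$, $\mu^\dag((A\cup e)^\dag)$ and $\mu^\dag((A\setminus e)^\dag)$ respectively, leaving the indicator untouched. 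The second substitution is purely combinatorial bookkeeping on the dual side: the edge duality $e\in A\Leftrightarrow e_\dag\notin A^\dag$ reindexes the sum over $e\in E$ as a sum over $e_\dag\in E^\dag$ (using $\abs{E}=\abs{E^\dag}$), and translates adding/removing $e$ into removing/adding $e_\dag$, so that $(A\cup e)^\dag = A^\dag\setminus e_\dag$ and $(A\setminus e)^\dag = A^\dag\cup e_\dag$. One must also check that $A\ominus B\subset e$ is equivalent to $A^\dag\ominus B^\dag\subset e_\dag$, which again is immediate from the edge-level duality. After these replacements the summand is exactly the defining summand of $P^\dag_{\rm HB}(A^\dag,B^\dag)$, completing the identity.

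The step I expect to require the most care is the reindexing and the swapping of $\cup$ and $\setminus$ under dualization: one has to be careful that adding an edge $e$ to $A$ on the primal side corresponds precisely to deleting its dual $e_\dag$ on the dual side, and that the two terms in the denominator therefore exchange roles but leave the sum $\mu^\dag(A^\dag\setminus e_\dag)+\mu^\dag(A^\dag\cup e_\dag)$ invariant. This is the only place where the crooked logic of duality could introduce a sign or an index error. Everything else is a formal rewriting, and the fact that both $P_{\rm HB}$ and $P^\dag_{\rm HB}$ are genuine (lazy) transition matrices guarantees the diagonal entries match automatically once all off-diagonal entries agree and each row sums to one. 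Since this argument is short and the relevant duality facts \eqref{eq:dual}--\eqref{eq:dual-p} are already recorded, I would present it as the displayed computation followed by the one-line observation that conjugate matrices share their spectrum.
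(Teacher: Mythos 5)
Your proposal is correct and follows essentially the same route as the paper: the paper's proof is precisely the displayed chain of equalities establishing $P_{\rm HB}(A,B)=P^\dag_{\rm HB}(A^\dag,B^\dag)$ via the duality relation \eqref{eq:dual}, the reindexing $e\mapsto e_\dag$ with the swap of $\cup$ and $\setminus$, and the translated indicator, followed by the observation that the two matrices therefore share their eigenvalues. Nothing further is needed.
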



\begin{remark}
Indeed, it is possible to prove a result similar to Lemma~\ref{lemma:HB_dual} 
also for non-planar graphs as long they can be embedded 
(without intersecting edges) in surfaces of 
bounded genus. In this case, the equality becomes an inequality with a constant 
that depends exponentially on the genus. In particular, this leads to the same 
result as in Theorems \ref{th:SW_square} and \ref{th:HB_square} 
for $\Z_L^2$ with periodic boundary condition, i.e. the square lattice on the 
torus. But since this would extend this article by several pages, we decide 
to refer the interested reader to \cite{U-phd}.
\end{remark}

Additionally, we know from \cite[Theorem~5]{U2} that 
\begin{equation} \label{eq:lower}
\lambda\left(\frac{I+P_{\rm SB}}{2}\right) \;\le\; \lambda(P_{\rm SW})
\end{equation}
for arbitrary graphs.
Combining Theorem~\ref{th:main} with Lemma~\ref{lemma:HB_dual}, 
Lemma~\ref{lemma:SB-HB} and \eqref{eq:lower} (in this order) 
proves Corollary~\ref{coro:SW_dual}.

\subsection{Application to the square lattice}

In this subsection we recall the results of \cite{U2} to conclude the proofs 
of Theorems~\ref{th:SW_square} and \ref{th:HB_square}. 
For this let $P_{\rm SW}$ be the Swendsen-Wang dynamics for the 
random-cluster model on $\Z_L^2$ with parameters $p$ and $q$.
First note that, by \cite[Corollary~2]{U2}, we know that there exist constants 
$c_p$, $c'$ and $C$ such that 
\[
\lambda(P_{\rm SW})^{-1} \;\le\; c_p N 
			\qquad \text{ for } p < p_c(q)
\]
and
\[
\lambda(P_{\rm SW})^{-1} \;\le\; c' N^C
			\qquad \text{ for } q=2 \text{ and } p = p_c(2),
\]
where $N:=\abs{V_L}=L^2$. Here, $N\le m\le 2N$ unless $L=1$ ($m:=\abs{E_L}$).

Additionally, it is easy to obtain (using the most natural embedding of $\Z_L^2$ in 
the sphere) that the dual graph $\Z_L^{2*}$ of $\Z_L^2$ is isomorph to 
$\Z_{L-1}^2\cup_{\delta} v^*$, where $v^*$ is an arbitrary vertex in the "outer" 
face of the embedding of $\Z_L^2$ and $\Z_{L-1}^2\cup_{\delta} v^*$ means 
that there is an edge between $v^*$ and every vertex of the \emph{boundary} 
of $\Z_{L-1}^2$. I.e. $\Z_{L-1}^2\cup_{\delta} v^*=(V_L^*,E_L^*)$, where 
$V_L^*=V_{L-1}\cup v^*$ and 
$E_L^*=E_{L-1}\cup\left\{\{v^*,u\}:\,u=(u_1,u_2)\in V_{L-1},\, 
	\{u_1,u_2\}\cap\{1,L-1\}\neq\varnothing\right\}$. 
	
Using Theorem~1$'$ from\cite{U2} (and the discussion after it) we obtain that 
there is a constant $c_p$ such that 
\[
\lambda(P^\dag_{\rm SW})^{-1} \;\le\; c_p N 
			\qquad \text{ for } p^* < p_c(q), 
\]
where $P^\dag_{\rm SW}$ is the Swendsen-Wang dynamics for the RC model on 
$\Z_L^{2*}$ with parameters $p^*$ and $q$, which is the dual model to the RC 
model on $\Z_L^2$ with parameters $p$ and $q$ whenever 
$\frac{p^*}{1-p^*}=\frac{q(1-p)}{p}$. Note that this implies $p^*<p_c$ iff $p>p_c$.
Hence we immediately obtain Theorem~\ref{th:SW_square} from 
Corollary~\ref{coro:SW_dual} and Theorem~\ref{th:HB_square} from 
Theorem~\ref{th:main}.

\section{Conclusion} \label{sec:end}

In this article we gave the first proof of rapid mixing of a Markov chain for 
the Ising (resp. Potts) model on the two-dimensional square lattice at all 
(resp. all non-critical) temperatures. 
Again it should be pointed out that the results of this paper 
(about mixing in $\Z^2$) are ultimately 
due to the numerous results for mixing of the single-spin dynamics for the 
Ising/Potts model, as stated in the introduction, and are therefore far 
from being sharp, because the Swendsen-Wang dynamics is believed to be 
(much) faster than single-spin dynamics.
Here, we mainly considered Markov chains for the closely related 
random-cluster model. 
Surprisingly, there is an upper bound on the spectral gap of the 
Swendsen-Wang dynamics in terms of the spectral gap of the 
heat-bath dynamics that only depends on the number of edges of the graph, 
and does not require any structure of the graph. 
This bound is tight up to the logarithmic factor, e.g. for the RC model on trees.

The proof of this uses some ideas from Markov chain comparison, 
but the main part (of this article) was to find a representation of the Markov 
chains on a larger state space and bounding the norm of the involved 
(Markov) operators. We guess that this technique can be used also in 
other settings, such as Markov chains in general state spaces, to 
compare Markov chains with small and large "step sizes".

As stated above, the proof is via comparison and it would still be interesting 
to find a direct proof of rapid mixing of the SW dynamics, especially 
at the critical temperature.


{
\linespread{1}
\bibliographystyle{amsalpha}
\bibliography{Bibliography}
}

\end{document}